\newenvironment{proof}{{\noindent \it Proof.}}{\hfill $\blacksquare$\par}
\newtheorem{theorem}{Theorem}[section]
\newtheorem{lemma}[theorem]{Lemma}
\newtheorem{problem}{Problem}[section]
\begin{document}

\title{Ordering Chemical Graphs by Sombor Indices and Its Applications}
\author{Hechao Liu$^{1}$, Lihua You$^{1,}$\thanks{Corresponding author}, Yufei Huang$^{2}$
 \\
{\small $^1$School of Mathematical Sciences, South China Normal University,}\\ {\small Guangzhou, 510631, P. R. China}\\
 \small {\tt hechaoliu@m.scnu.edu.cn},\quad  \small {\tt ylhua@scnu.edu.cn}\\
{\small $^2$Department of Mathematics Teaching, Guangzhou Civil Aviation College,}\\ {\small Guangzhou, 510403, P. R. China}\\
\small {\tt fayger@qq.com}\\\\
\small { (Received March 17, 2021)}
}
\date{}
\maketitle
\begin{abstract}
Topological indices are a class of numerical invariants that predict certain physical and chemical properties of molecules.
Recently, two novel topological indices, named as Sombor index and reduced Sombor index, were introduced by Gutman, defined as
$$SO(G)=\sum_{uv\in E(G)}\sqrt{d_{G}^{2}(u)+d_{G}^{2}(v)},$$
$$SO_{red}(G)=\sum_{uv\in E(G)}\sqrt{(d_{G}(u)-1)^{2}+(d_{G}(v)-1)^{2}},$$
where $d_{G}(u)$ denotes the degree of vertex $u$ in $G$.

In this paper, our aim is to order the chemical trees, chemical unicyclic graphs, chemical bicyclic graphs and chemical tricyclic graphs with respect to Sombor index and reduced Sombor index. We determine the first fourteen minimum chemical trees, the first four minimum chemical unicyclic graphs, the first three minimum chemical bicyclic graphs, the first seven minimum chemical tricyclic graphs. At last, we consider the applications of reduced Sombor index to octane isomers.
\end{abstract}
\restoregeometry
\baselineskip=0.30in
\maketitle

\makeatletter
\renewcommand\@makefnmark%
{\mbox{\textsuperscript{\normalfont\@thefnmark)}}}
\makeatother

\section{Introduction}

Let $G$ be a simple connected graph with vertex set $V(G)$ and edge set $E(G)$. Denote by $N_{G}(u)$ the set of vertices adjacent to $u$ in $G$ for every $u\in V(G)$. The degree $d_{G}(u)$ of $u$ in $G$ is the cardinality of $N_{G}(u)$. Let $\Delta(G)$ (or simply $\Delta$) be the maximum degree of $G$. Let $n_{i}(G)$ (or simply $n_{i}$) be the number of vertices with degree $i$ in $G$.
Denote by $m_{i,j}(G)$ the numbers of edges connected a vertex with degree $i$ and a vertex with degree $j$ in $G$.
In this paper, all notations and terminologies used but not defined can refer to the textbook \cite{jaus2008}.

The Sombor index ($SO(G)$ for short) and reduced Sombor index ($SO_{red}(G)$ for short) of a graph $G$ are defined as \cite{gumn2021}
$$SO(G)=\sum_{uv\in E(G)}\sqrt{d_{G}^{2}(u)+d_{G}^{2}(v)},$$
$$SO_{red}(G)=\sum_{uv\in E(G)}\sqrt{(d_{G}(u)-1)^{2}+(d_{G}(v)-1)^{2}}.$$
Shortly after, Deng et al. \cite{dengt2021} determined the maximum Sombor indices of chemical trees. Cruz et al. \cite{rirm2021} determined the extremal values of some chemical graphs. Also, Red\v{z}epovi\'{c} \cite{redz2021} studied chemical applicability of Sombor indices. Milovanovi\'{c} et al. \cite{milo2021} considered the bounds of Sombor indices and the relations between Sombor indices and other indices. Furthermore, Liu et al. \cite{tliu2021} obtained some bounds for reduced Sombor index of graphs with given several parameters and some special graphs, they also obtained the the expected values of reduced Sombor index in random polyphenyl chains, the bounds of reduced Sombor spectrum radius and energy. For more details of Sombor indices, we refer to \cite{aiva2021,dxsa2021,gumn2021,guma2021,hoxu2021,wmlf2021,trdo2021}.

The chemical graph is a graph with $d_{G}(u)\leq 4$ for all $u\in V(G)$.
Ordering chemical graphs by some topological indices is an interesting problem.
Ghalavand and Ashrafi had done a lot of work on ordering chemical graphs by some topological indices such as Wiener polarity index \cite{argh2017}, sum exdeg index \cite{laas2017}, total irregularity \cite{gvrf2020}, forgotten coindex \cite{ganf2020}, Randi\'{c} index and sum-connectivity index\cite{ghas2018} and hyper-Zagreb index \cite{ghor2019}. For more related papers can be find in \cite{alid2018,ggda2017,jcli2021} and references cited therein.

Motivated by \cite{ghas2018,ggda2017}, our aim is to consider the similar issues regarding (reduced) Sombor index.
In this paper, we determine the first fourteen minimum chemical trees, the first four minimum chemical unicyclic graphs, the first three minimum chemical bicyclic graphs, the first seven minimum chemical tricyclic graphs. At last, we consider the applications of reduced Sombor index to octane isomers.

\section{Preliminaries}

Here are some important transformations that will be used in the proof of main results.

\begin{lemma}\label{l-22}
Let $G_{0}$ be a connected graph with vertices $u_{1},u_{2},u_{3},u_{4}$ $($$d_{G_{0}}(u_{1})=1$, $d_{G_{0}}(u_{2})$ $=2$, $d_{G_{0}}(u_{3})=3$ or $4$, $d_{G_{0}}(u_{4})=1$, $\{u_{1}u_{2},u_{3}u_{4}\}\subseteq E(G_{0})$$)$. Suppose that $P=v_{1}v_{2}\cdots v_{l}$ is a path. Denote by $G_{1}$ the graph gotten from $G_{0}$, $P$ by attaching vertices $u_{1}v_{1}$.
Let $G_{2}=G_{1}-u_{1}v_{1}+u_{4}v_{1}$. Then $SO(G_{1})> SO(G_{2})$ and $SO_{red}(G_{1})> SO_{red}(G_{2})$.
\end{lemma}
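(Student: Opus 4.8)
The plan is to argue exactly as in Lemma \ref{l-21}: compute $SO(G_2)-SO(G_1)$ by cancelling all edges whose endpoint degrees are unchanged, so that only the local contributions near $u_1,\dots,u_4$ and near $v_1$ survive, then bound the resulting expression by a constant that is visibly negative. In $G_1$ the vertex $u_1$ has degree $2$ (its old degree $1$ plus the new edge $u_1v_1$), while in $G_2$ the vertex $u_1$ drops back to degree $1$ and $u_4$ rises from degree $1$ to degree $2$; all other degrees, in particular $d(u_2)=2$, $d(u_3)=t\in\{3,4\}$, and the degrees along $P$, are the same in both graphs. So the difference involves only the edges $u_1u_2$, $u_3u_4$, and the first edge of the attached path.

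First I would split into the two cases $l=1$ and $l\ge 2$, since the degree of $v_1$ differs in these. For $l\ge 2$: in $G_1$ the relevant edges are $u_1u_2$ (degrees $2,2$), $u_3u_4$ (degrees $t,1$), $u_1v_1$ (degrees $2,2$); in $G_2$ they are $u_1u_2$ (degrees $1,2$), $u_3u_4$ (degrees $t,2$), $u_4v_1$ (degrees $2,2$). Hence
\begin{align*}
SO(G_2)-SO(G_1)&=\big[\sqrt{1^2+2^2}+\sqrt{t^2+2^2}+\sqrt{2^2+2^2}\big]\\
&\quad-\big[\sqrt{2^2+2^2}+\sqrt{t^2+1^2}+\sqrt{2^2+2^2}\big]\\
&=\sqrt{5}+\sqrt{t^2+4}-\sqrt{t^2+1}-2\sqrt{2},
\end{align*}
and since $\sqrt{t^2+4}-\sqrt{t^2+1}=\tfrac{3}{\sqrt{t^2+4}+\sqrt{t^2+1}}$ is decreasing in $t$, it is at most its value at $t=3$, giving $SO(G_2)-SO(G_1)\le \sqrt5+\sqrt{13}-\sqrt{10}-2\sqrt2<0$; the case $t=4$ is even smaller. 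For $l=1$ the vertex $v_1$ is a leaf, so in $G_1$ the edge $u_1v_1$ has degrees $2,1$ and in $G_2$ the edge $u_4v_1$ has degrees $2,1$; the path-end terms $\sqrt{2^2+1^2}$ now cancel, and one is left with $SO(G_2)-SO(G_1)=\sqrt5+\sqrt{t^2+4}-\sqrt{t^2+1}-\sqrt{8}$ — wait, more carefully the $u_1u_2$ edge still contributes $\sqrt5$ versus $\sqrt8$ — so $SO(G_2)-SO(G_1)=\sqrt5-\sqrt8+\sqrt{t^2+4}-\sqrt{t^2+1}$, again negative for $t\in\{3,4\}$ by the same monotonicity estimate.

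The computation is entirely routine; the only things to be careful about are bookkeeping the degree changes correctly (in particular that $u_2$ and $u_3$ keep their degrees, so the edges $u_2$-to-its-other-neighbour and $u_3$-to-its-other-neighbours cancel and need not be written out) and handling the small-$l$ boundary case separately. I expect no real obstacle: the hardest part is merely verifying the two or three numerical inequalities $\sqrt5+\sqrt{13}<\sqrt{10}+2\sqrt2$ and its $t=4$ analogue, which hold with comfortable margin.
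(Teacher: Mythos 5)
Your computation is correct and matches the paper's proof essentially verbatim: in both graphs only the edges $u_1u_2$ and $u_3u_4$ change their contribution (the attached edge to $v_1$ joins a degree-$2$ vertex to $v_1$ in either graph, so it cancels), yielding $SO(G_2)-SO(G_1)=\sqrt{5}+\sqrt{t^2+4}-2\sqrt{2}-\sqrt{t^2+1}<0$ for $t\in\{3,4\}$. Your split into $l=1$ and $l\ge 2$ is harmless but unnecessary (both cases give the same expression, which is why the paper does not split), and your monotonicity trick for reducing $t=4$ to $t=3$ is a minor stylistic variant of the paper's two direct numerical checks.
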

\begin{proof}
By the definition of Sombor index, we have
\begin{align*}
SO(G_{2})-SO(G_{1})
=&\sqrt{1^{2}+2^{2}}+\sqrt{d_{G_{0}}^{2}(u_{3})+2^{2}}-[\sqrt{2^{2}+2^{2}}+\sqrt{d_{G_{0}}^{2}(u_{3})+1^{2}}]\\
=&\sqrt{5}+\sqrt{d_{G_{0}}^{2}(u_{3})+4}-2\sqrt{2}-\sqrt{d_{G_{0}}^{2}(u_{3})+1^{2}}.
\end{align*}

If $d_{G_{0}}^{2}(u_{3})=3$, then $SO(G_{2})-SO(G_{1})=\sqrt{5}+\sqrt{13}-2\sqrt{2}-\sqrt{10}<0$.

If $d_{G_{0}}^{2}(u_{3})=4$, then $SO(G_{2})-SO(G_{1})=\sqrt{5}+\sqrt{20}-2\sqrt{2}-\sqrt{17}<0$.

Therefore $SO(G_{1})> SO(G_{2})$.
In a similar way, we also have $SO_{red}(G_{1})> SO_{red}(G_{2})$.
This completes the proof.
\end{proof}


\begin{lemma}\label{l-23}
Let $G_{0}$, $G$ be two connected graphs with vertices $x\in V(G_{0})$,  $y\in V(G)$ $($ $d_{G_{0}}(x)=1$ or $2$ and $d_{G}(y)=2$ or $3$ $)$. Suppose that $P_{1}=u_{1}u_{2}\cdots u_{k}$ and $P_{2}=v_{1}v_{2}\cdots v_{l}$ are two paths . Denote by $G_{1}$ the graph gotten from $G_{0}$, $G$, $P_{1}$ and $P_{2}$ by attaching vertices $v_{1}x$, $u_{1}x$ and $u_{k}y$. Let $G_{2}=G_{1}-\{u_{1}x,u_{k}y\}+\{xy,u_{1}v_{l}\}$. Then $SO(G_{1})> SO(G_{2})$ and $SO_{red}(G_{1})> SO_{red}(G_{2})$.
\end{lemma}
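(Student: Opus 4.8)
Like Lemmas~\ref{l-21} and~\ref{l-22}, the argument is a direct computation of $SO(G_2)-SO(G_1)$ from the definition, the aim being to show it is negative. The first task is to read off from Figure~\ref{fig-3} exactly how $G_2$ is obtained from $G_1$ — deleting the edges $u_1x$, $u_ky$, $yx$ opens the cycle through $x$ and $y$, and the path $P_1$ is re-attached (to the far end of $P_2$) so that $G_2$ stays connected — and then to list every edge on which $G_1$ and $G_2$ differ, together with the degrees of its endpoints in each graph. The crucial structural fact is that $d_{G_1}(x)$ and $d_{G_1}(y)$ both strictly exceed $d_{G_2}(x)$ and $d_{G_2}(y)$ respectively, while the only degrees that increase are those of one or two path-endpoints, going from $1$ to $2$.

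This makes the estimate largely self-evident. Since $\sqrt{a^{2}+b^{2}}$ is increasing in each variable, every edge that survives in $G_2$ and is incident to $x$ or to $y$ has strictly smaller weight there than in $G_1$, so all of those terms — in particular the entire sums over the neighbours of $x$ in $G_0$ and of $y$ in $G$, whose degrees consequently never appear in the final bound — only push $SO(G_2)-SO(G_1)$ downward and may be discarded when bounding from above. After discarding them one is left with a comparison between the deleted edges, each of weight at least $\sqrt{2^{2}+4^{2}}=\sqrt{20}$ by the hypotheses $d_{G_0}(x)\ge 1$ and $d_G(y)\ge 2$ (with $yx$ even heavier, of weight $\sqrt{(d_{G_0}(x)+3)^{2}+(d_G(y)+2)^{2}}$), and the handful of newly created or re-weighted path-edges, whose endpoints have degree at most $2$ in $G_2$ and which therefore contribute at most $2\sqrt2$ each, plus the one interior edge $v_{l-1}v_l$ whose weight can rise by only $2\sqrt2-\sqrt5$.

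I would run this through the four cases $k=1$ or $k\ge2$ crossed with $l=1$ or $l\ge2$ — exactly the configurations where path-endpoints coincide ($u_1=u_k$ when $k=1$, $v_1=v_l$ when $l=1$), so that the set of changed edges must be re-derived in each — refining further over $d_{G_0}(x)\in\{1,2\}$ and $d_G(y)\in\{2,3\}$ if a case demands it. In every case what is left to check is a concrete inequality between sums of square roots of small integers, of the same flavour as $\sqrt5+2\sqrt2-2\sqrt{10}<0$ from the proof of Lemma~\ref{l-21}, and it is verified numerically. The real obstacle is not the arithmetic but the edge bookkeeping in the short-path cases: when $k=1$ the path $P_1$ has no interior edge and the deleted edges $u_1x$ and $u_ky$ both meet $u_1$, so $d(u_1)$ drops by two at once and the re-attachment has to be described carefully; the analogous subtlety arises when $l=1$.
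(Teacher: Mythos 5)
Your overall strategy---enumerate the edges on which $G_1$ and $G_2$ differ, make the unknown neighbour degrees drop out, and reduce to a handful of numerical inequalities over $d_{G_0}(x)\in\{1,2\}$ and $d_G(y)\in\{2,3\}$---is the paper's strategy. But you have reconstructed the wrong transformation, and the structural claims you build on it are false for the one the paper actually proves. In the paper's $G_2$ the edge $xy$ is \emph{created}, not deleted: the internal path $xu_1\cdots u_ky$ joining the two branch vertices is replaced by the single edge $xy$, and $P_1$ is re-attached as an extension of the pendant path $P_2$. (The notation $G_1-\{u_1x,u_ky,yx\}$ in the statement is defective, since $yx\notin E(G_1)$ as constructed, but the paper's algebra is unambiguous: the $G_2$ side contains the term $\sqrt{(d_{G_0}(x)+2)^2+(d_{G}(y)+1)^2}$, i.e.\ an edge joining $x$ and $y$ with the \emph{same} degrees they had in $G_1$; and the transformation must preserve the number of edges, since Theorem \ref{t-33} uses it to turn trees of order $n$ into other trees of order $n$.) Consequently your ``crucial structural fact'' fails: $d_{G_2}(x)=d_{G_1}(x)$ and $d_{G_2}(y)=d_{G_1}(y)$, so the sums over the $G_0$-neighbours of $x$ and the $G$-neighbours of $y$ vanish from $SO(G_2)-SO(G_1)$ by exact cancellation, not because they ``only push the difference downward''; and there is no deleted edge of weight $\sqrt{(d_{G_0}(x)+3)^2+(d_G(y)+2)^2}$.

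This is not just bookkeeping, because under the correct transformation the inequality is not ``largely self-evident.'' Writing $t=d_{G_0}(x)$ and $s=d_G(y)$, the path-interior adjustments contribute a constant $c=\sqrt5$ when $l=1$ and $c=2\sqrt2$ when $l\ge2$ (independently of $k$, which is why the paper needs no case split on $k$), and what remains to be checked is
\[
\sqrt{(t+2)^2+(s+1)^2}+c\;<\;\sqrt{(t+2)^2+2^2}+\sqrt{(s+1)^2+2^2}.
\]
Here the \emph{new} edge $xy$ is the heaviest edge in sight (up to $\sqrt{32}$), strictly heavier than either deleted edge $xu_1$ or $u_ky$ individually, so your picture of heavy deleted edges of weight at least $\sqrt{20}$ against new path edges worth at most $2\sqrt2$ does not apply; it would only hold if three edges were removed and one added, which changes the size of the graph and cannot be the intended operation. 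The displayed inequality is true and is what the paper verifies case by case, but your proposal as written never arrives at it, so the argument has a genuine gap even though the method (finite case analysis plus numerical verification of surd inequalities) is the right one.
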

\begin{proof}
We consider the following two cases.

\noindent {\bf Case 1}. $l=1$.
\begin{align*}
SO(G_{2})-SO(G_{1})
=&\sqrt{1^{2}+2^{2}}+\sqrt{(d_{G_{0}}(x)+2)^{2}+(d_{G_{0}}(y)+1)^{2}}\\
& -[\sqrt{(d_{G_{0}}(x)+2)^{2}+1^{2}}+\sqrt{(d_{G_{0}}(y)+1)^{2}+2^{2}}].
\end{align*}
Since $d_{G_{0}}(x)=1$ or $2$ and $d_{G_{0}}(y)=2$ or $3$.

If $d_{G_{0}}(x)=1$ and $d_{G_{0}}(y)=2$, then $SO(G_{2})-SO(G_{1})=\sqrt{5}+3\sqrt{2}-\sqrt{10}-\sqrt{13}<0$.

If $d_{G_{0}}(x)=1$ and $d_{G_{0}}(y)=3$, then $SO(G_{2})-SO(G_{1})=\sqrt{5}+5-\sqrt{10}-\sqrt{20}<0$.

If $d_{G_{0}}(x)=2$ and $d_{G_{0}}(y)=2$, then $SO(G_{2})-SO(G_{1})=\sqrt{5}+5-\sqrt{17}-\sqrt{13}<0$.

If $d_{G_{0}}(x)=2$ and $d_{G_{0}}(y)=3$, then $SO(G_{2})-SO(G_{1})=\sqrt{5}+4\sqrt{2}-\sqrt{17}-\sqrt{20}<0$.

\noindent {\bf Case 2}. $l\geq 2$.
\begin{align*}
SO(G_{2})-SO(G_{1})
=&\sqrt{2^{2}+2^{2}}+\sqrt{(d_{G_{0}}(x)+2)^{2}+(d_{G_{0}}(y)+1)^{2}}\\
& -[\sqrt{(d_{G_{0}}(x)+2)^{2}+2^{2}}+\sqrt{(d_{G_{0}}(y)+1)^{2}+2^{2}}].
\end{align*}
Since $d_{G_{0}}(x)=1$ or $2$ and $d_{G_{0}}(y)=2$ or $3$.

If $d_{G_{0}}(x)=1$ and $d_{G_{0}}(y)=2$, then $SO(G_{2})-SO(G_{1})=2\sqrt{2}+3\sqrt{2}-\sqrt{13}-\sqrt{13}<0$.

If $d_{G_{0}}(x)=1$ and $d_{G_{0}}(y)=3$, then $SO(G_{2})-SO(G_{1})=2\sqrt{2}+5-\sqrt{13}-\sqrt{20}<0$.

If $d_{G_{0}}(x)=2$ and $d_{G_{0}}(y)=2$, then $SO(G_{2})-SO(G_{1})=2\sqrt{2}+5-\sqrt{20}-\sqrt{13}<0$.

If $d_{G_{0}}(x)=2$ and $d_{G_{0}}(y)=3$, then $SO(G_{2})-SO(G_{1})=2\sqrt{2}+4\sqrt{2}-\sqrt{20}-\sqrt{20}<0$.

Therefore $SO(G_{1})> SO(G_{2})$.
In a similar way, we also have $SO_{red}(G_{1})> SO_{red}(G_{2})$.
This completes the proof.
\end{proof}


\begin{lemma}\label{l-21}
Let $G_{0}$ be a connected graph with the vertex $x$ $($$d_{G_{0}}(x)=1$ or $2$$)$. Suppose that $P_{1}=u_{1}u_{2}\cdots u_{k}$ and $P_{2}=v_{1}v_{2}\cdots v_{l}$ are two paths. Denote by $G_{1}$ the graph gotten from $G_{0}$, $P_{1}$ and $P_{2}$ by attaching vertices $u_{1}x$ and $v_{1}x$.
Let $G_{2}=G_{1}-u_{1}x+u_{1}v_{l}$. Then $SO(G_{1})> SO(G_{2})$ and $SO_{red}(G_{1})> SO_{red}(G_{2})$.
\end{lemma}
\begin{proof}
Let $d_{G_{0}}(x)=t=1$ or $2$, $N_{G_{0}}(x)=\{z_{1},z_{2},\cdots,z_{t}\}$, $d_{G_{0}}(z_{i})=d_{i}$, $1\leq i\leq t$.

\noindent {\bf Case 1}. $k=l=1$.
\begin{align*}
SO(G_{2})-SO(G_{1})
=&\sqrt{1^{2}+2^{2}}+\sqrt{(t+1)^{2}+2^{2}}+\sum_{i=1}^{t}\sqrt{d_{i}^{2}+(t+1)^{2}}\\
 &-[\sqrt{(t+2)^{2}+1^{2}}+\sqrt{(t+2)^{2}+1^{2}}+\sum_{i=1}^{t}\sqrt{d_{i}^{2}+(t+2)^{2}}]\\
<&\sqrt{5}+\sqrt{(t+1)^{2}+2^{2}}-2\sqrt{(t+2)^{2}+1^{2}}.
\end{align*}

If $t=1$, then $SO(G_{2})-SO(G_{1})<\sqrt{5}+2\sqrt{2}-2\sqrt{10}<0$.

If $t=2$, then $SO(G_{2})-SO(G_{1})<\sqrt{5}+\sqrt{13}-2\sqrt{17}<0$.

\noindent {\bf Case 2}. $k=1, l\geq 2$.
\begin{align*}
SO(G_{2})-SO(G_{1})
<&2\sqrt{2}+\sum_{i=1}^{t}\sqrt{d_{i}^{2}+(t+1)^{2}}-[\sqrt{(t+2)^{2}+1^{2}}+\sum_{i=1}^{t}\sqrt{d_{i}^{2}+(t+2)^{2}}]\\
<&2\sqrt{2}-\sqrt{(t+2)^{2}+1^{2}}.
\end{align*}

If $t=1$, then $SO(G_{2})-SO(G_{1})<2\sqrt{2}-\sqrt{10}<0$.

If $t=2$, then $SO(G_{2})-SO(G_{1})<2\sqrt{2}-\sqrt{17}<0$.

\noindent {\bf Case 3}. $k\geq 2, l=1$.

The conclusion holds from Case 2 and symmetry.

\noindent {\bf Case 4}. $k\geq 2, l\geq 2$.
\begin{align*}
SO(G_{2})-SO(G_{1})
=&2\sqrt{2^{2}+2^{2}}+\sqrt{(t+1)^{2}+2^{2}}+\sum_{i=1}^{t}\sqrt{d_{i}^{2}+(t+1)^{2}}\\
 &-[\sqrt{1^{2}+2^{2}}+2\sqrt{(t+2)^{2}+2^{2}}+\sum_{i=1}^{t}\sqrt{d_{i}^{2}+(t+2)^{2}}]\\
<&4\sqrt{2}+\sqrt{(t+1)^{2}+2^{2}}-\sqrt{5}-2\sqrt{(t+2)^{2}+2^{2}}.
\end{align*}

If $t=1$, then $SO(G_{2})-SO(G_{1})<6\sqrt{2}-2\sqrt{13}-\sqrt{5}<0$.

If $t=2$, then $SO(G_{2})-SO(G_{1})<\sqrt{13}+4\sqrt{2}-5\sqrt{5}<0$.

Therefore $SO(G_{1})> SO(G_{2})$.
In a similar way, we also have $SO_{red}(G_{1})> SO_{red}(G_{2})$.
This completes the proof.
\end{proof}


\begin{lemma}\label{l-24}
Let $G_{0}$ be a connected graph with vertices $x$ and $y$ $($ $d_{G_{0}}(x)=2$ or $3$, $d_{G_{0}}(y)=2$ or $3$$)$. Suppose that $P_{1}=u_{1}u_{2}\cdots u_{k}$ and $P_{2}=v_{1}v_{2}\cdots v_{l}$ are two paths. Denote by $G_{1}$ the graph gotten from $G_{0}$, $P_{1}$ and $P_{2}$ by attaching vertices $u_{1}x$ and $v_{1}y$. Let $G_{2}=G_{1}-u_{1}x+u_{1}v_{l}$. Then $SO(G_{1})> SO(G_{2})$ and $SO_{red}(G_{1})> SO_{red}(G_{2})$.
\end{lemma}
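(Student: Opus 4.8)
The plan is to mimic the structure of Lemma~\ref{l-21}, comparing $SO(G_2)$ and $SO(G_1)$ edge by edge after the transformation $G_2=G_1-u_1x+u_lv_l$. The only edges whose endpoint-degrees change are those incident to $x$, to $y$, to $u_1$ and to $v_l$ (and possibly $u_k$ when the paths are short); everything inside $G_0$ and the interior of the paths is untouched, so it cancels in the difference $SO(G_2)-SO(G_1)$. Writing $d_{G_0}(x)=s\in\{2,3\}$ and $d_{G_0}(y)=t\in\{2,3\}$, in $G_1$ the vertex $x$ has degree $s+1$ and $y$ has degree $t+1$, while in $G_2$ both drop back to $s$ and $t$ respectively, and a new edge $u_lv_l$ of the type $\sqrt{2^2+2^2}$ (or with an endpoint of degree $1$ if a path has length one) appears. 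I would split into the same four cases as Lemma~\ref{l-21} according to whether $k=1$ or $k\ge 2$ and $l=1$ or $l\ge 2$, since these determine the degrees of $u_1,u_l,v_1,v_l$ and of the path-neighbours of $x$ and $y$.

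In each case I would write $SO(G_2)-SO(G_1)$ as an explicit sum of square-root terms with the common $G_0$-internal terms removed, then bound the positive contributions crudely from above (replacing, e.g., $\sqrt{d_i^2+(s+1)^2}$ by $\sqrt{d_i^2+(s+2)^2}$ so those cancel against $G_1$ terms, or simply dropping negative terms we do not need) to reduce to a fixed finite list of inequalities among constants like $\sqrt5,\sqrt8,\sqrt{13},\sqrt{17},\sqrt{20}$, one inequality for each of the at most four combinations $(s,t)\in\{2,3\}^2$. Each such numerical inequality is then checked directly. This is exactly the pattern already executed in Lemmas~\ref{l-21}--\ref{l-23}, so the bookkeeping is routine.

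The main obstacle is getting the case bookkeeping exactly right: the statement moves the pendant path $P_1$ from $x$ to the far end $v_l$ of $P_2$, so one must be careful about what happens to the neighbour of $x$ along $P_1$ (it was $u_1$, now $u_1$'s neighbour structure changes), about the degenerate subcases where $k=1$ (then "$u_1=u_l$" and detaching $u_1$ from $x$ leaves $u_1$ as a leaf hanging off $v_l$) or $l=1$ (then $v_1=v_l$ and $y$'s path-neighbour degree is $1$ rather than $2$), and about whether the coefficient on the newly created $\sqrt{2^2+2^2}$ term is one or the term is instead $\sqrt{1^2+2^2}$. I would also double-check that the hypotheses $s,t\in\{2,3\}$ are genuinely needed — in particular that the inequality can fail when $s$ or $t$ equals $1$ or $4$ — to make sure no case has been mislabeled. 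Once the correct list of constant inequalities is assembled, verifying $SO(G_1)>SO(G_2)$ in every case is immediate.
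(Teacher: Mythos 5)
Your overall strategy is exactly the paper's: isolate the edges whose endpoint degrees change, split into the four cases $k=1$ or $k\ge 2$ versus $l=1$ or $l\ge 2$, and reduce to a finite list of numerical inequalities over $(s,t)\in\{2,3\}^2$, just as in Lemmas \ref{l-21}--\ref{l-23}. However, there is a concrete error in your degree bookkeeping that would derail the computation. Since $G_{2}=G_{1}-u_{1}x+u_{l}v_{l}$ deletes only the edge $u_{1}x$ and keeps $v_{1}y$, the vertex $y$ does \emph{not} ``drop back to degree $t$'' in $G_{2}$: it retains degree $d_{G_{0}}(y)+1$ in both graphs, and no edge from $y$ into $G_{0}$ changes type. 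What actually changes on the $P_{2}$ side is only the degree of the far endpoint $v_{l}$, which rises from $1$ to $2$; this alters the single edge $v_{l-1}v_{l}$ (or $yv_{1}$ when $l=1$, giving the term $\sqrt{(d_{G_{0}}(y)+1)^{2}+2^{2}}$ in place of $\sqrt{(d_{G_{0}}(y)+1)^{2}+1^{2}}$, which is how $d_{G_{0}}(y)$ enters the paper's Cases 1 and 3 at all; in Cases 2 and 4 it does not appear). Your own later sentence, that the transformation ``moves the pendant path $P_{1}$ from $x$ to the far end $v_{l}$ of $P_{2}$,'' is the correct picture and contradicts the claim that $y$'s degree drops; if you computed $SO(G_{2})-SO(G_{1})$ with $y$ losing a degree you would be bounding the wrong quantity.

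Beyond that, the proposal stops at the level of a plan: none of the four case expressions is written out and none of the constant inequalities (e.g.\ $2\sqrt{2}+2\sqrt{5}-3\sqrt{10}<0$, $2\sqrt{2}+3\sqrt{10}-4\sqrt{17}<0$) is actually assembled or checked, and these verifications are the entire substance of the proof. The paper also uses one further estimate you do not mention, namely bounding $\sum_{i=1}^{t}\bigl(\sqrt{d_i^{2}+t^{2}}-\sqrt{d_i^{2}+(t+1)^{2}}\bigr)$ from above by taking $d_i=1$ (the worst case, since the difference is increasing in $d_i$ toward $0$ but is maximized in absolute-value terms the paper needs at $d_i=1$); some such uniform bound on the $G_{0}$-internal neighbours of $x$ is needed before the difference reduces to constants. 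With the $y$-degree issue corrected and the case computations actually carried out, your outline would reproduce the paper's proof.
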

\begin{proof}
Let $d_{G_{0}}(x)=t=2$ or $3$, $N_{G_{0}}(x)=\{z_{1},z_{2},\cdots,z_{t}\}$, $d_{G_{0}}(z_{i})=d_{i}$, $1\leq i\leq t$.

\noindent {\bf Case 1}. $k=l=1$.
\begin{align*}
SO(G_{2})-SO(G_{1})
=&\sqrt{1^{2}+2^{2}}+\sqrt{(d_{G_{0}}(y)+1)^{2}+2^{2}}+\sum_{i=1}^{t}\sqrt{d_{i}^{2}+t^{2}}\\
 &-[\sqrt{(d_{G_{0}}(y)+1)^{2}+1^{2}}+\sqrt{(t+1)^{2}+1^{2}}+\sum_{i=1}^{t}\sqrt{d_{i}^{2}+(t+1)^{2}}]\\
< &\sqrt{5}+\sqrt{(d_{G_{0}}(y)+1)^{2}+4}-[\sqrt{(d_{G_{0}}(y)+1)^{2}+1^{2}}+\sqrt{(t+1)^{2}+1^{2}}].
\end{align*}
Since $d_{G_{0}}(y)=2$ or $3$ and $t=2$ or $3$.

If $d_{G_{0}}(y)=2$ and $t=2$, then $SO(G_{2})-SO(G_{1})<\sqrt{5}+\sqrt{13}-\sqrt{10}-\sqrt{10}<0$.

If $d_{G_{0}}(y)=2$ and $t=3$, then $SO(G_{2})-SO(G_{1})<\sqrt{5}+\sqrt{13}-\sqrt{10}-\sqrt{17}<0$.

If $d_{G_{0}}(y)=3$ and $t=2$, then $SO(G_{2})-SO(G_{1})<\sqrt{5}+\sqrt{20}-\sqrt{10}-\sqrt{17}<0$.

If $d_{G_{0}}(y)=3$ and $t=3$, then $SO(G_{2})-SO(G_{1})<\sqrt{5}+\sqrt{20}-\sqrt{17}-\sqrt{17}<0$.

\noindent {\bf Case 2}. $k=1, l\geq 2$.
\begin{align*}
SO(G_{2})-SO(G_{1})
=&\sqrt{2^{2}+2^{2}}+\sum_{i=1}^{t}\sqrt{d_{i}^{2}+t^{2}}-[\sqrt{(t+1)^{2}+1^{2}}+\sum_{i=1}^{t}\sqrt{d_{i}^{2}+(t+1)^{2}}]\\
< &2\sqrt{2}-\sqrt{(t+1)^{2}+1^{2}}.
\end{align*}
Since $t=2$ or $3$.

If $t=2$, then $SO(G_{2})-SO(G_{1})<2\sqrt{2}-\sqrt{10}<0$.

If $t=3$, then $SO(G_{2})-SO(G_{1})<2\sqrt{2}-\sqrt{17}<0$.

\noindent {\bf Case 3}. $k\geq 2, l=1$.
\begin{align*}
SO(G_{2})-SO(G_{1})
=&\sqrt{2^{2}+2^{2}}+\sqrt{(d_{G_{0}}(y)+1)^{2}+2^{2}}+\sum_{i=1}^{t}\sqrt{d_{i}^{2}+t^{2}}\\
 &-[\sqrt{(d_{G_{0}}(y)+1)^{2}+1^{2}}+\sqrt{(t+1)^{2}+2^{2}}+\sum_{i=1}^{t}\sqrt{d_{i}^{2}+(t+1)^{2}}]\\
< &2\sqrt{2}+\sqrt{(d_{G_{0}}(y)+1)^{2}+4}-[\sqrt{(d_{G_{0}}(y)+1)^{2}+1^{2}}+\sqrt{(t+1)^{2}+4}].
\end{align*}
Since $d_{G_{0}}(y)=2$ or $3$ and $t=2$ or $3$.

If $d_{G_{0}}(y)=2$ and $t=2$, then $SO(G_{2})-SO(G_{1})<2\sqrt{2}+\sqrt{13}-\sqrt{10}-\sqrt{13}<0$.

If $d_{G_{0}}(y)=2$ and $t=3$, then $SO(G_{2})-SO(G_{1})<2\sqrt{2}+\sqrt{13}-\sqrt{10}-\sqrt{20}<0$.

If $d_{G_{0}}(y)=3$ and $t=2$, then $SO(G_{2})-SO(G_{1})<2\sqrt{2}+\sqrt{20}-\sqrt{17}-\sqrt{13}<0$.

If $d_{G_{0}}(y)=3$ and $t=3$, then $SO(G_{2})-SO(G_{1})<2\sqrt{2}+\sqrt{20}-\sqrt{17}-\sqrt{20}<0$.

\noindent {\bf Case 4}. $k\geq 2, l\geq 2$.
\begin{align*}
SO(G_{2})-SO(G_{1})
=&2\sqrt{2^{2}+2^{2}}+\sum_{i=1}^{t}\sqrt{d_{i}^{2}+t^{2}}\\
 &-[\sqrt{1^{2}+2^{2}}+\sqrt{(t+1)^{2}+2^{2}}+\sum_{i=1}^{t}\sqrt{d_{i}^{2}+(t+1)^{2}}]\\
<&4\sqrt{2}-\sqrt{5}-\sqrt{(t+1)^{2}+4}.
\end{align*}

Since $t=2$ or $3$.

If $t=2$, then $SO(G_{2})-SO(G_{1})< 4\sqrt{2}-\sqrt{5}-\sqrt{13}<0$.

If $t=3$, then $SO(G_{2})-SO(G_{1})< 4\sqrt{2}-\sqrt{5}-\sqrt{20}<0$.

Therefore $SO(G_{1})> SO(G_{2})$.
In a similar way, we also have $SO_{red}(G_{1})> SO_{red}(G_{2})$.
This completes the proof.
\end{proof}

\ \notag\

\section{Main results}

Denote by $CT_{n}$, $CU_{n}$, $CB_{n}$, $CTG_{n}$ the set of chemical trees, chemical unicyclic graphs, chemical bicyclic graphs, chemical tricyclic graphs with $n$ vertices, respectively. If $G$ has $t_{i}$ vertices of degree $d_{i}$ for $1\leq i\leq s$, then we denote the degree sequence of $G$ as $D(G)\triangleq (d_{1}^{t_{1}}, d_{2}^{t_{2}}, \cdots, d_{s}^{t_{s}})$, where $\sum\limits_{i=1}^{s}t_{i}=n$.

\subsection{Chemical trees}

Let $\Phi(n)=\{T\in (4^{1},2^{n-5},1^{4})|m_{1,2}(T)=m_{2,4}(T)=4, m_{1,4}(T)=0, m_{2,2}(T)=n-9\}$, $n\geq 9$, and $\Omega(n)=\{T\in (3^{3},2^{n-8},1^{5})|m_{1,2}(T)=m_{2,3}(T)=5, m_{1,3}(T)=0, m_{3,3}(T)=2, m_{2,2}(T)=n-13\}$, $n\geq 13$.

If $T\in \Phi(n)$, then
$$SO(T)=2\sqrt{2}n+12\sqrt{5}-18\sqrt{2}\approx 2\sqrt{2}n+1.376971607,\eqno{(1)}$$
$$SO_{red}(T)=\sqrt{2}n+4+4\sqrt{10}-9\sqrt{2}\approx \sqrt{2}n+3.921188579.\eqno{(2)}$$
\indent If $T\in \Omega(n)$, then
$$SO(T)=2\sqrt{2}n+5\sqrt{5}+5\sqrt{13}-20\sqrt{2} \approx 2\sqrt{2}n+0.923825017,\eqno{(3)}$$
$$SO_{red}(T)=\sqrt{2}n+5+5\sqrt{5}-9\sqrt{2} \approx \sqrt{2}n+3.452417826.\eqno{(4)}$$
Recall that $n_{i}$ is the numbers of vertices with degree $i$ in $G$.
If $G\in CT_{n}$, since $\sum_{i=1}^{4}n_{i}=n$ and $n_{1}+2n_{2}+3n_{3}+4n_{4}=2(n-1)$, then $n_{1}=n_{3}+2n_{4}+2$ and $n_{2}=n-2n_{3}-3n_{4}-2$.

\begin{theorem}\label{t-32}
Let $T^{*}\in CT_{n}$ $($$n\geq 9$$)$, $\Delta(T^{*})=4$. If $T^{*}\notin \Phi(n)$, then there exists $T\in \Phi(n)$, such that $SO(T)<SO(T^{*})$ and $SO_{red}(T)<SO_{red}(T^{*})$.
\end{theorem}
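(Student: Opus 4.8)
The plan is to show that any chemical tree $T^*$ with $\Delta(T^*)=4$ that does not lie in $\Phi(n)$ can be transformed, by a finite sequence of the reductions in Lemmas~\ref{l-21}--\ref{l-24}, into a tree in $\Phi(n)$, with the Sombor index strictly decreasing at each step; since the Sombor index of any member of $\Phi(n)$ is the constant $2\sqrt2\,n+12\sqrt5-18\sqrt2$, this yields $SO(T)<SO(T^*)$. First I would record the structural target: a tree in $\Phi(n)$ has exactly one vertex of degree $4$, four leaves, all attached to the degree-$4$ vertex through internal paths of degree-$2$ vertices (so $m_{1,2}=m_{2,4}=4$, $m_{1,4}=0$, and the remaining $n-9$ edges join two degree-$2$ vertices). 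In terms of the degree-count identities recalled just before the theorem, $\Delta(T^*)=4$ forces $n_4\ge 1$; the goal configurations have $n_4=1$, $n_3=0$, $n_1=4$. So the argument must (i) reduce $n_4$ to $1$, (ii) reduce $n_3$ to $0$, and (iii) once $n_4=1,\,n_3=0$, push all four pendant paths so they hang off the unique degree-$4$ vertex and are "in series," i.e., eliminate any $m_{1,4}$ edge and any branching among the degree-$2$ part.

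The key steps, in order. \textbf{Step 1 (kill extra high-degree vertices).} If $T^*$ has $n_3+n_4\ge 2$, pick two vertices $x,y$ of degree $\ge 3$ joined by a path all of whose internal vertices have degree $2$ (such a pair exists in any tree with $\ge 2$ branch vertices). Applying Lemma~\ref{l-24} (with $x,y$ the two branch vertices, using a pendant path hanging at $x$ — which exists because leaves lie "below" branch vertices) detaches a path from $x$ and reattaches it at the end of a path emanating from $y$, strictly decreasing $SO$ and strictly decreasing $n_3+n_4$ or at least simplifying the branch structure; iterate until only one branch vertex remains. One must check the degree hypotheses of Lemma~\ref{l-24} ($d(x),d(y)\in\{2,3\}$ in $G_0$, i.e. after removing the moved path) are met; if a branch vertex has degree $4$, first split off one of its pendant branches to lower its $G_0$-degree to $3$, or invoke Lemma~\ref{l-23}/Lemma~\ref{l-21} as appropriate. \textbf{Step 2 (the unique branch vertex must be degree $4$).} If after Step 1 the single branch vertex has degree $3$, then $n_4=0$, $n_3=1$, contradicting $\Delta(T^*)=4$ — so actually $\Delta=4$ guarantees the surviving branch vertex has degree $4$, and we need Step 1 to preserve $\Delta=4$, which it does since Lemma~\ref{l-24} never raises a degree above the max of the two branch-vertex degrees. \textbf{Step 3 (linearize the pendant paths at the degree-$4$ vertex).} Now $T^*$ is a "spider" with centre $v$ of degree $4$: four paths $Q_1,Q_2,Q_3,Q_4$ emanate from $v$. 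If any $Q_i$ has length $\ge 2$, it contributes an $m_{1,2}$ edge at its leaf and an edge at $v$ of type $m_{2,4}$; if some $Q_i$ has length $1$ (a pendant edge at $v$, an $m_{1,4}$ edge), apply Lemma~\ref{l-21} with $x=v$ to merge two of the pendant paths into one longer path attached to $v$, killing the $m_{1,4}$ edge and strictly decreasing $SO$; also apply Lemma~\ref{l-21} to straighten any branching inside the degree-$2$ portion. Iterate until exactly four maximal paths of length $\ge 1$ hang off $v$ with no further branching — but we must end with $m_{1,2}=4$, so in fact all four must have length $\ge 2$ unless $n$ is too small; the hypothesis $n\ge 9$ is exactly what guarantees enough degree-$2$ vertices to make all four pendant paths have length $\ge 2$. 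At that point $T\in\Phi(n)$ and the chain of strict inequalities gives the result.

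The main obstacle I anticipate is bookkeeping in Step 1 and Step 3: verifying at each application that the relevant vertex degrees fall in the ranges demanded by Lemmas~\ref{l-21}--\ref{l-24} (in particular that the "base graph" $G_0$ obtained by deleting the moved path has the branch vertex at degree $1,2$ or $3$ as required), and ensuring the process terminates — termination follows by a monotone quantity such as $2n_4+n_3$ decreasing or staying equal with the "total branching excess" strictly dropping, while $SO$ strictly decreases throughout so no configuration repeats. A secondary subtlety is the small-$n$ boundary: one should check that for $n\ge 9$ the terminal spider genuinely has all four legs of length $\ge 2$ (equivalently $n-5\ge 4$ interior degree-$2$ vertices distributed as $n-9\ge 0$ after accounting for the four leaves and four "first" degree-$2$ vertices adjacent to leaves), so that the formula $SO(T)=2\sqrt2\,n+12\sqrt5-18\sqrt2$ for $\Phi(n)$ is attained and the comparison closes.
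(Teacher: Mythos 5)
Your overall strategy --- repeatedly applying the $SO$-decreasing transformations of Section~2 until a member of $\Phi(n)$ is reached, then using the fact that all members of $\Phi(n)$ share the same Sombor index --- is the same as the paper's, which first uses Lemma~\ref{l-21} to force the degree sequence $(4^{1},2^{n-5},1^{4})$ and then uses Lemma~\ref{l-22} to force the edge distribution of $\Phi(n)$. However, your Step~3 contains a genuine error. To remove an $m_{1,4}$ edge (a pendant edge at the degree-$4$ centre $v$) you propose to apply Lemma~\ref{l-21} with $x=v$, merging two of the pendant paths at $v$ into one. But in Lemma~\ref{l-21} the vertex $x$ loses a neighbour in passing from $G_{1}$ to $G_{2}$: the centre would drop from degree $4$ to degree $3$, so the resulting tree has $\Delta=3$ and lies outside $(4^{1},2^{n-5},1^{4})$, hence outside $\Phi(n)$. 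This is not a harmless detour: trees with $\Delta\leq 3$ can have Sombor index strictly smaller than $2\sqrt{2}n+12\sqrt{5}-18\sqrt{2}$ (for instance the path $A_{1}$ and the trees in $\Omega(n)$), so a chain of strict inequalities terminating at such a tree does not yield $SO(T)<SO(T^{*})$ for $T\in\Phi(n)$.

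The missing ingredient is Lemma~\ref{l-22}, which you list in your opening plan but never actually invoke. It is exactly the degree-sequence-preserving move that eliminates an $m_{1,4}$ (or $m_{1,3}$) edge: one transfers a sub-path from a pendant leg whose leaf is adjacent to a degree-$2$ vertex onto the leaf $u_{4}$ adjacent to the degree-$3$-or-$4$ vertex $u_{3}$, so that $u_{4}$ becomes a degree-$2$ vertex while a new leaf adjacent to a degree-$2$ vertex appears elsewhere; the counts $n_{1},n_{2},n_{3},n_{4}$ are unchanged and $SO$ strictly decreases. With Lemma~\ref{l-22} in place of Lemma~\ref{l-21} in your Step~3 (and the termination bookkeeping you already describe), your argument collapses to the paper's two cases: Lemma~\ref{l-21} to reach the degree sequence $(4^{1},2^{n-5},1^{4})$, then Lemma~\ref{l-22} to reach $\Phi(n)$ itself.
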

\begin{proof}
We consider the following two cases.

\noindent {\bf Case 1}. If $T^{*}\in (4^{1},2^{n-5},1^{4})$, then $T^{*}$ meets at least one of the following conditions $m_{1,2}(T^{*})\neq 4$, $m_{2,4}(T^{*})\neq 4$, $m_{1,4}(T^{*})\neq 0$, $m_{2,2}(T^{*})\neq n-9$, i.e., $m_{1,2}(T^{*})< 4$, $m_{2,4}(T^{*})< 4$, $m_{1,4}(T^{*})> 0$, $m_{2,2}(T^{*})> n-9$. By the transformation of Lemma \ref{l-22}, we can obtain a chemical tree $T\in \Phi(n)$, so we have $SO(T)<SO(T^{*})$.

\noindent {\bf Case 2}. If $T^{*}\notin (4^{1},2^{n-5},1^{4})$. By the transformation of Lemma \ref{l-21}, we can obtain a chemical tree $T\in (4^{1},2^{n-5},1^{4})$.
If $T\in \Phi(n)$, by Lemma \ref{l-21}, we have $SO(T)<SO(T^{*})$. If $T\notin \Phi(n)$, we are back to Case $1$ and the conclusion holds.

Therefore $SO(T)<SO(T^{*})$.
In a similar way, we also have $SO_{red}(T)<SO_{red}(T^{*})$.
This completes the proof.
\end{proof}

\begin{theorem}\label{t-33}
Let $T^{*}\in CT_{n}$ $($$n\geq 13$$)$, $\Delta(T^{*})=3$, $n_{3}(T^{*})\geq 3$. If $T^{*}\notin \Omega(n)$, then there exists $T\in \Omega(n)$, such that $SO(T)<SO(T^{*})$ and $SO_{red}(T)<SO_{red}(T^{*})$.
\end{theorem}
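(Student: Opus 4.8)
The plan is to mirror the structure of the proof of Theorem \ref{t-32}, using the transformations from the preliminaries to push an arbitrary $T^{*}$ toward the canonical family $\Omega(n)$ while strictly decreasing the Sombor index at each step. The overall skeleton is a two-level case analysis: first reduce to the ``right'' degree sequence $(3^{3},2^{n-8},1^{5})$, and then, within that degree sequence, reduce to the correct pattern of edge-types $m_{i,j}$.

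\textbf{Step 1 (reaching the degree sequence).} Suppose $T^{*}\notin(3^{3},2^{n-8},1^{5})$. Since $\Delta(T^{*})=3$ and $n_{3}(T^{*})\geq 3$, the degree-counting identities $n_{1}=n_{3}+2n_{4}+2$ and $n_{2}=n-2n_{3}-3n_{4}-2$ (with $n_{4}=0$ here) force $n_{1}(T^{*})=n_{3}(T^{*})+2$ and $n_{2}(T^{*})=n-2n_{3}(T^{*})-2$; hence if the degree sequence is wrong it is because $n_{3}(T^{*})>3$. The idea is to apply Lemma \ref{l-21} (which merges two pendant paths hanging off a vertex of degree $1$ or $2$ into one path) repeatedly; each application strictly decreases $SO$ and does not create new vertices of degree $3$ or $4$, so it cannot increase $n_{3}$. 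To actually lower $n_{3}$ one argues that in any tree with $n_{3}\geq 4$ there is a vertex $x$ of degree $1$ or $2$ on a path between two branch vertices, so Lemma \ref{l-21} (or Lemma \ref{l-23}, which eats a whole bridging path between two branch vertices and lowers the number of high-degree vertices) applies. Iterating, we arrive at a chemical tree $T'$ with exactly the degree sequence $(3^{3},2^{n-8},1^{5})$ and $SO(T')<SO(T^{*})$. If $T'\in\Omega(n)$ we are done; otherwise we are in Step 2.

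\textbf{Step 2 (fixing the edge-type pattern inside the degree sequence).} Now $T^{*}$ (or $T'$) has degree sequence $(3^{3},2^{n-8},1^{5})$ but violates at least one of $m_{1,2}=5$, $m_{2,3}=5$, $m_{1,3}=0$, $m_{3,3}=2$, $m_{2,2}=n-13$. As in Case 1 of Theorem \ref{t-32}, these four conditions are not independent: with three vertices of degree $3$ and five of degree $1$, the handshake-type counts $\sum_j m_{1,j}=5$, $\sum_j m_{3,j}=9$, etc., mean that forcing $m_{1,3}=0$ and $m_{3,3}=2$ already pins down the rest. The transformations to invoke here are Lemma \ref{l-22} (which slides a pendant vertex from next to a degree-$2$ neighbour to next to a distant degree-$3$-or-$4$ vertex, converting an $m_{1,2},m_{2,2}$ pair into $m_{1,2}$ at the far end, i.e. moving pendants toward the branch vertices and eliminating $m_{1,3}$ edges) and Lemma \ref{l-24} (which reconnects a path hanging off a degree-$2$-or-$3$ vertex, used to arrange the three degree-$3$ vertices into the desired configuration with $m_{3,3}=2$, i.e. a path $P_3$ on the branch vertices). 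Each such move strictly decreases $SO$; since there are only finitely many edge-type patterns and each pattern-correcting move strictly decreases the (integer-valued, bounded) discrepancy from the target pattern, the process terminates at some $T''\in\Omega(n)$ with $SO(T'')<SO(T^{*})$. Finally, since every $T\in\Omega(n)$ has the same Sombor value $SO(T)=2\sqrt{2}n+5\sqrt{5}+5\sqrt{13}-20\sqrt{2}$ (computed from its edge-type counts), we conclude $SO(T)<SO(T^{*})$ for all $T\in\Omega(n)$.

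\textbf{Main obstacle.} The delicate part is not the inequalities — those are exactly the ones already verified in Lemmas \ref{l-21}--\ref{l-24} — but the bookkeeping in Step 2: showing that whenever the degree sequence is $(3^{3},2^{n-8},1^{5})$ and the configuration is not the one defining $\Omega(n)$, at least one of Lemmas \ref{l-22} or \ref{l-24} is genuinely applicable (i.e.\ the required local degree pattern $d_{G_0}(u_1)=1,d_{G_0}(u_2)=2,d_{G_0}(u_3)\in\{3,4\}$, or the pair of degree-$2$-or-$3$ vertices with the right pendant paths, actually occurs), and that applying it moves us strictly closer to $\Omega(n)$ without ever leaving the degree sequence. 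Concretely one must check the handful of ``stuck'' configurations: the three degree-$3$ vertices all mutually nonadjacent, or forming only one edge, or a triangle-free path with a misplaced pendant, and exhibit in each the relevant transformation. Once that finite check is done, termination and the strict decrease of $SO$ are automatic.
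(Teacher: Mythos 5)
Your two-step skeleton is exactly the paper's two-case proof: first use Lemma \ref{l-21} to drive $n_{3}$ down to $3$ (the paper's Case 2), then repair the edge-type pattern inside the degree sequence $(3^{3},2^{n-8},1^{5})$ (the paper's Case 1), and your counting observation that $m_{1,3}=0$ and $m_{3,3}=2$ pin down $m_{1,2}=5$, $m_{2,3}=5$, $m_{2,2}=n-13$ is correct. The genuine problem is the tool you pick for the second half of Step 2. To raise $m_{3,3}$ to $2$ you invoke Lemma \ref{l-24}, but that transformation ($G_{2}=G_{1}-u_{1}x+u_{k}v_{l}$) strictly \emph{lowers} the degree of the branch vertex $x$ from $d_{G_{0}}(x)+1$ to $d_{G_{0}}(x)$; applied to a tree with exactly three vertices of degree $3$ it produces a tree with $n_{3}=2$, i.e.\ it exits the degree sequence you are trying to stay inside. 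That is fatal to the comparison, not merely untidy: every chemical tree with $\Delta=3$ and $n_{3}\leq 2$ lies in one of the classes $A_{1},\dots,A_{13}$ of Table \ref{table1}, all of which have Sombor index at most $2\sqrt{2}n+0.0613$, strictly \emph{below} $SO(\Omega(n))=2\sqrt{2}n+0.9238$. So the chain $SO(T'')<SO(T^{*})$ with $T''$ in an $A_{i}$ class tells you nothing about $SO(T)$ for $T\in\Omega(n)$, and the conclusion does not follow.

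The lemma the paper uses here instead is Lemma \ref{l-23}, which is degree-preserving: it deletes the internal path $P_{1}$ joining the two branch vertices $x$ and $y$, makes $x$ and $y$ adjacent, and relocates $P_{1}$ onto the pendant path $P_{2}$, so both endpoints keep their degrees while $m_{3,3}$ increases. That is the move that brings the three degree-$3$ vertices together into a $P_{3}$ without leaving $(3^{3},2^{n-8},1^{5})$; Lemma \ref{l-22} then clears any remaining $m_{1,3}$ edges, exactly as you describe. (A smaller quibble in the same direction: in Step 1 you describe Lemma \ref{l-23} as "lowering the number of high-degree vertices" -- it does not; only Lemma \ref{l-21} does that, by merging two pendant paths at a degree-$3$ vertex into one.) Your closing remark that the real work is verifying applicability of the transformations in every non-$\Omega(n)$ configuration is well taken -- the paper itself glosses over this -- but the argument only closes once Lemma \ref{l-24} is replaced by Lemma \ref{l-23} in Step 2.
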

\begin{proof}
We consider the following two cases.

\noindent {\bf Case 1}. If $T^{*}\in (3^{3},2^{n-8},1^{5})$, then $T^{*}$ meets at least one of the following conditions $m_{1,2}(T^{*})\neq 5$, $m_{2,3}(T^{*})\neq 5$, $m_{1,3}(T^{*})\neq 0$, $m_{3,3}(T^{*})\neq 2$, $m_{2,2}(T^{*})\neq n-13$, i.e., $m_{1,2}(T^{*})< 5$, $m_{2,3}(T^{*})> 5$, $m_{1,3}(T^{*})> 0$, $m_{3,3}(T^{*})< 2$, $m_{2,2}(T^{*})> n-13$. By the transformation of Lemma \ref{l-22} and Lemma \ref{l-23}, we can obtain a chemical tree $T\in \Omega(n)$, so we have $SO(T)<SO(T^{*})$.

\noindent {\bf Case 2}. If $T^{*}\notin (3^{3},2^{n-8},1^{5})$, since $n_{1}(T^{*})=n_{3}(T^{*})+2$ and $n_{3}(T^{*})\geq 3$, then $n_{3}(T^{*})\geq 4$. By the transformation of Lemma \ref{l-21}, we can obtain a chemical tree $T\in (3^{3},2^{n-8},1^{5})$.
If $T\in \Omega(n)$, by Lemma \ref{l-21}, we have $SO(T)<SO(T^{*})$. If $T\notin \Omega(n)$, we are back to Case $1$ and the conclusion holds.

Therefore $SO(T)<SO(T^{*})$.
In a similar way, we also have $SO_{red}(T)<SO_{red}(T^{*})$.
This completes the proof.
\end{proof}

In what follows, we determine the extremal chemical trees with respect to (reduced) Sombor index.
It is worth noting that the relevant data of Table $1\sim 8$ except the values of (reduced) Sombor indices are from \cite{ghas2018,ggda2017}.

\begin{table}[h]
	\centering
    \caption{$CT_{n}$ with $\Delta \leq 3$, $n_{3}\leq 2$ and their (reduced)Sombor index.}
    \setlength{\tabcolsep}{1mm}{
	\begin{tabular}{c|ccccccc}\hline
	           &	 $m_{3,3}$   &	  $m_{2,3}$   &	  $m_{1,2}$   &	  $m_{1,3}$   &	  $m_{2,2}$   &	  $SO(G)$  &	$SO_{red}(G)$   \\ \hline
	$A_{1}$    &     $0$   &	  $0$   &	  $2$   &	  $0$   &	  $n-3$   &	  $(2\sqrt{2}n-4.013145419)$  &	  $(\sqrt{2}n-2.242640687)$  \\ \hline
    $A_{2}$    &     $0$   &	  $1$   &	  $1$   &	  $2$   &	  $n-5$   &	  $(2\sqrt{2}n-1.975961050)$  &	  $(\sqrt{2}n+0.165000165)$  \\ \hline
    $A_{3}$    &     $0$   &	  $2$   &	  $2$   &	  $1$   &	  $n-6$   &	  $(2\sqrt{2}n-2.125046582)$  &	  $(\sqrt{2}n-0.013145419)$  \\ \hline
    $A_{4}$    &     $0$   &	  $3$   &	  $3$   &	  $0$   &	  $n-7$   &	  $(2\sqrt{2}n-2.274132114)$  &	  $(\sqrt{2}n-0.191291004)$  \\ \hline
    $A_{5}$    &     $0$   &	  $2$   &	  $0$   &	  $4$   &	  $n-7$   &	  $(2\sqrt{2}n+0.061223318)$  &	  $(\sqrt{2}n+2.572641018)$  \\ \hline
    $A_{6}$    &     $0$   &	  $3$   &	  $1$   &	  $3$   &	  $n-8$   &	  $(2\sqrt{2}n-0.087862213)$  &	  $(\sqrt{2}n+2.394495433)$  \\ \hline
    $A_{7}$    &     $0$   &	  $4$   &	  $2$   &	  $2$   &	  $n-9$   &	  $(2\sqrt{2}n-0.236947745)$  &	  $(\sqrt{2}n+2.216349848)$  \\ \hline
    $A_{8}$    &     $1$   &	  $1$   &	  $1$   &	  $3$   &	  $n-7$   &	  $(2\sqrt{2}n-0.227896952)$  &	  $(\sqrt{2}n+2.165000165)$  \\ \hline
    $A_{9}$    &     $0$   &	  $5$   &	  $3$   &	  $1$   &	  $n-10$  &	  $(2\sqrt{2}n-0.386033277)$  &	  $(\sqrt{2}n+2.038204263)$  \\ \hline
    $A_{10}$   &     $1$   &	  $2$   &	  $2$   &	  $2$   &	  $n-8$   &	  $(2\sqrt{2}n-0.376982484)$  &	  $(\sqrt{2}n+1.986854580)$  \\ \hline
    $A_{11}$   &     $0$   &	  $6$   &	  $4$   &	  $0$   &	  $n-11$  &	  $(2\sqrt{2}n-0.535118809)$  &	  $(\sqrt{2}n+1.860058678)$  \\ \hline
    $A_{12}$   &     $1$   &	  $3$   &	  $3$   &	  $1$   &	  $n-9$   &	  $(2\sqrt{2}n-0.526068016)$  &	  $(\sqrt{2}n+1.808708995)$  \\ \hline
    $A_{13}$   &     $1$   &	  $4$   &	  $4$   &	  $0$   &	  $n-10$  &	  $(2\sqrt{2}n-0.675153548)$  &	  $(\sqrt{2}n+1.630563411)$  \\ \hline
	\end{tabular}}
	
	\label{table1}
\end{table}

\begin{theorem}\label{t-34}
If $n\geq 13$, $T_{1}\in A_{1}$, $T_{2}\in A_{4}$, $T_{3}\in A_{3}$, $T_{4}\in A_{2}$, $T_{5}\in A_{13}$, $T_{6}\in A_{11}$, $T_{7}\in A_{12}$, $T_{8}\in A_{9}$, $T_{9}\in A_{10}$, $T_{10}\in A_{7}$, $T_{11}\in A_{8}$, $T_{12}\in A_{6}$, $T_{13}\in A_{5}$, $T_{14}\in \Omega_{n}$, and $T\in CT_{n}\setminus \{T_{1},T_{2},\cdots, T_{14}\}$, then
$SO(T_{1})<SO(T_{2})<SO(T_{3})<SO(T_{4})<SO(T_{5})<SO(T_{6})<SO(T_{7})<SO(T_{8})<SO(T_{9})<SO(T_{10})<SO(T_{11})<SO(T_{12})<SO(T_{13})<SO(T_{14})<SO(T)$.
\end{theorem}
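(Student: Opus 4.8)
The plan is to combine the three structural theorems already proved (Theorems~\ref{t-32} and~\ref{t-33}) with the explicit table of Sombor values (Table~\ref{table1}) for chemical trees of small maximum degree, and then sort the finitely many candidate families by their $SO$-values. First I would partition $CT_{n}$ according to $\Delta$ and $n_{3}$: any $T\in CT_{n}$ either has $\Delta=4$, or $\Delta=3$ with $n_{3}\geq 3$, or $\Delta=3$ with $n_{3}\in\{1,2\}$, or $\Delta\leq 2$ (the last being just the path $P_{n}$, which is $A_{1}$). By Theorem~\ref{t-32}, every tree with $\Delta=4$ that is not in $\Phi(n)$ has Sombor index strictly larger than that of a tree in $\Phi(n)$, and $SO(T)=2\sqrt2 n+1.376\ldots$ on $\Phi(n)$; by Theorem~\ref{t-33}, every tree with $\Delta=3$, $n_{3}\geq 3$ not in $\Omega(n)$ exceeds the value $2\sqrt2 n+0.923\ldots$ attained on $\Omega(n)$. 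So the only trees that can possibly be among the fourteen smallest are: the finitely many families $A_{1},\dots,A_{13}$ from Table~\ref{table1} (the case $\Delta=3$, $n_{3}\leq 2$), together with $\Omega(n)=H_{n}$ and $\Phi(n)$.

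Next I would observe that for all of these distinguished families the Sombor index has the form $2\sqrt2 n + c$ for an explicit constant $c$ depending only on the family, so comparison reduces to comparing the constants. Reading off Table~\ref{table1} and the two displayed formulas, the constants in increasing order are: $A_{1}$ ($-4.013$), $A_{4}$ ($-2.274$), $A_{3}$ ($-2.125$), $A_{2}$ ($-1.976$), $A_{13}$ ($-0.675$), $A_{11}$ ($-0.535$), $A_{12}$ ($-0.526$), $A_{9}$ ($-0.386$), $A_{10}$ ($-0.377$), $A_{7}$ ($-0.237$), $A_{8}$ ($-0.228$), $A_{6}$ ($-0.088$), $A_{5}$ ($+0.061$), then $H_{n}=\Omega(n)$ ($+0.924$), and finally $\Phi(n)$ ($+1.377$) and everything else. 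This yields exactly the claimed chain $SO(T_{1})<SO(T_{2})<\cdots<SO(T_{14})<SO(T)$ once we check that the fifteenth-place value (the $\Phi(n)$ value, or any non-extremal tree) strictly exceeds the $H_{n}$ value, which is clear since $1.377>0.924$, and that no tree outside our list can sneak below the $A_{5}$ value --- this is precisely what Theorems~\ref{t-32} and~\ref{t-33} guarantee, combined with the fact that the table is exhaustive for the remaining case $\Delta=3,\ n_{3}\leq 2$.

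Two subsidiary points need care. First, I must justify that Table~\ref{table1} really lists \emph{all} chemical trees with $\Delta\le 3$ and $n_3\le 2$: using the degree-sequence identities $n_1=n_3+2n_4+2$ and $n_2=n-2n_3-3n_4-2$ (here $n_4=0$), the cases $n_3=0$ and $n_3=1,2$ give bounded possibilities for the edge-type counts $m_{i,j}$, and the transformation lemmas (Lemmas~\ref{l-21}--\ref{l-24}) show that within each prescribed degree sequence the Sombor index is minimized by making the high-degree vertices as ``spread out'' as possible, pinning down the extremal $m_{i,j}$ profile in each row; the rows $A_1$–$A_{13}$ enumerate these, and for each fixed profile $SO$ is constant, so each $A_i$ is a well-defined value. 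Second, I should confirm the hypotheses $n\geq 13$ are enough for all fourteen families to be nonempty and for the $\Omega(n),\Phi(n)$ trees to genuinely have the stated degree sequences.

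The main obstacle I anticipate is not the arithmetic sorting of the constants --- that is mechanical --- but rather the completeness claim for Table~\ref{table1}: one must argue that for $\Delta=3$, $n_3\le 2$ every chemical tree can be transformed, via Lemmas~\ref{l-21}--\ref{l-24} without increasing $SO$, into one of the thirteen canonical forms, and that the listed $m_{i,j}$-profiles are exactly the ones surviving these reductions. Concretely, for $n_3=2$ one has to track how the two degree-$3$ vertices interact (adjacent, i.e.\ $m_{3,3}=1$, versus non-adjacent, $m_{3,3}=0$) and how many pendant paths hang off them, which is where the bookkeeping gets delicate; everything else then follows by plugging degree-sequence counts into the definition of $SO$ and comparing finitely many explicit real numbers.
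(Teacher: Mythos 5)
Your proposal follows essentially the same route as the paper: reduce to finitely many candidate families via Theorems~\ref{t-32} and~\ref{t-33}, read the additive constants off Table~\ref{table1} and the displayed value for $\Omega(n)$ (which is indeed what the paper's otherwise undefined symbol $H_{n}$ must denote), and sort them. You are in fact more careful than the published argument, which asserts the completeness of Table~\ref{table1} and the correctness of each row's $m_{i,j}$-profile without proof; the ``subsidiary points'' you flag are precisely the steps the paper leaves implicit.
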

\begin{proof}
By Table \ref{table1} and the Sombor index of chemical trees among $\Omega(n)$, we have $SO(T_{1})<SO(T_{2})<SO(T_{3})<SO(T_{4})<SO(T_{5})<SO(T_{6})<SO(T_{7})<SO(T_{8})<SO(T_{9})<SO(T_{10})<SO(T_{11})<SO(T_{12})<SO(T_{13})<SO(T_{14})$.

If $\Delta(T)\leq 3$ and $n_{3}(T)\leq 2$, the conclusion holds. If $\Delta(T)=3$ and $n_{3}(T)\geq 3$, then by Theorem \ref{t-33}, the conclusion holds.
If $\Delta(T)= 4$, then by Equation (1),(3) and Theorem \ref{t-32}, the conclusion holds.
\end{proof}

Similar to the proof of Theorem \ref{t-34}, we have
\begin{theorem}\label{t-341}
If $n\geq 13$, $T_{1}\in A_{1}$, $T_{2}\in A_{4}$, $T_{3}\in A_{3}$, $T_{4}\in A_{2}$, $T_{5}\in A_{13}$, $T_{6}\in A_{12}$, $T_{7}\in A_{11}$, $T_{8}\in A_{10}$, $T_{9}\in A_{9}$, $T_{10}\in A_{8}$, $T_{11}\in A_{7}$, $T_{12}\in A_{6}$, $T_{13}\in A_{5}$, $T_{14}\in \Omega_{n}$, and $T\in CT_{n}\setminus \{T_{1},T_{2},\cdots, T_{14}\}$, then
$SO_{red}(T_{1})<SO_{red}(T_{2})<SO_{red}(T_{3})<SO_{red}(T_{4})<SO_{red}(T_{5})<SO_{red}(T_{6})<SO_{red}(T_{7})<SO_{red}(T_{8})<SO_{red}(T_{9})<
SO_{red}(T_{10})<SO_{red}(T_{11})<SO_{red}(T_{12})$ $<SO_{red}(T_{13})<SO_{red}(T_{14})<SO_{red}(T)$.
\end{theorem}

\subsection{Chemical unicyclic graphs}

In this subsection, we consider the extremal chemical unicyclic graphs with respect to (reduced) Sombor index.

\begin{table}[h]
	\centering
    \caption{Degree distributions $(DD)$ of $CU_{n}$ with $n_{1}\leq 2$.}
	\begin{tabular}{c|cccc}\hline
	           &	 $n_{4}$   &  $n_{3}$   & $n_{2}$   & $n_{1}$   \\ \hline
	$H_{1}$    &     $0$   &	  $0$   &	  $n$   &	  $0$        \\ \hline 
    $H_{2}$    &     $0$   &	  $1$   &	  $n-2$   &	  $1$        \\ \hline
    $H_{3}$    &     $1$   &	  $0$   &	  $n-3$   &	  $2$        \\ \hline
    $H_{4}$    &     $0$   &	  $2$   &	  $n-4$   &	  $2$        \\ \hline
	\end{tabular}
	
	\label{table2}
\end{table}

\begin{table}[h]
	\centering
    \caption{$CU_{n}$ with $n_{1}\leq 2$ and their (reduced)Sombor index.}
    \setlength{\tabcolsep}{1mm}{
	\begin{tabular}{c|cccccccccc}\hline
	            &  $DD$   &	 $m_{1,2}$   &  $m_{1,3}$   & $m_{1,4}$   &	 $m_{2,3}$  &  $m_{2,4}$  &  $m_{3,3}$  & $m_{2,2}$   &	 $SO(G)$ &	 $SO_{red}(G)$   \\ \hline
	$\alpha_{1}$    &  $H_{1}$   &    $0$   &	  $0$  &      $0$   &     $0$   &	  $0$  &    $0$   &     $n$   &   $(2\sqrt{2}n)$ &   $(\sqrt{2}n)$ \\
    \hline
    $\alpha_{2}$    &  $H_{2}$   &	  $0$   &	  $1$  &	  $0$   &	  $2$   &	  $0$  &	  $0$   &	  $n-3$   &	  $(2\sqrt{2}n+1.888)$ &	  $(\sqrt{2}n+2.229)$ \\ \hline

    $\alpha_{3}$    &  $H_{2}$   &	  $1$   &	  $0$  &	  $0$   &	  $3$   &	  $0$  &	  $0$   &	  $n-4$   &	  $(2\sqrt{2}n+1.739)$  &	  $(\sqrt{2}n+2.051)$ \\ \hline

    $\alpha_{4}$    &  $H_{3}$   &	  $0$   &	  $0$  &	  $2$   &	  $0$   &	  $2$  &	  $0$   &	  $n-4$   &	  $(2\sqrt{2}n+5.876)$  &	  $(\sqrt{2}n+6.667)$ \\ \hline

    $\alpha_{5}$    &  $H_{3}$   &	  $1$   &	  $0$  &	  $1$   &	  $0$   &	  $3$  &	  $0$   &	  $n-5$   &	  $(2\sqrt{2}n+5.633)$  &	  $(\sqrt{2}n+6.415)$ \\ \hline

    $\alpha_{6}$    &  $H_{3}$   &	  $2$   &	  $0$  &	  $0$   &	  $0$   &	  $4$  &	  $0$   &	  $n-6$   &	  $(2\sqrt{2}n+5.390)$  &	  $(\sqrt{2}n+6.163)$ \\ \hline

    $\alpha_{7}$    &  $H_{4}$   &	  $0$   &	  $2$  &	  $0$   &	  $2$   &	  $0$  &	  $1$   &	  $n-5$   &	  $(2\sqrt{2}n+3.636)$  &	  $(\sqrt{2}n+4.229)$  \\ \hline

    $\alpha_{8}$    &  $H_{4}$   &	  $1$   &	  $1$  &	  $0$   &	  $3$   &	  $0$  &	  $1$   &	  $n-6$   &	  $(2\sqrt{2}n+3.487)$  &	  $(\sqrt{2}n+4.051)$ \\ \hline

    $\alpha_{9}$    &  $H_{4}$   &	  $2$   &	  $0$  &	  $0$   &	  $4$   &	  $0$  &	  $1$   &	  $n-7$  &	  $(2\sqrt{2}n+3.337)$  &	  $(\sqrt{2}n+3.873)$ \\ \hline

    $\alpha_{10}$   &  $H_{4}$   &	  $0$   &	  $2$  &	  $0$   &	  $4$   &	  $0$  &	  $0$   &	  $n-6$   &	  $(2\sqrt{2}n+3.776)$  &	  $(\sqrt{2}n+4.458)$  \\ \hline

    $\alpha_{11}$   &  $H_{4}$   &	  $1$   &	  $1$  &	  $0$   &	  $5$   &	  $0$  &	  $0$   &	  $n-7$  &	  $(2\sqrt{2}n+3.627)$  &	  $(\sqrt{2}n+4.280)$ \\ \hline

    $\alpha_{12}$   &  $H_{4}$   &	  $2$   &	  $0$  &	  $0$   &	  $6$   &	  $0$  &	  $0$   &	  $n-8$   &	  $(2\sqrt{2}n+3.478)$  &	  $(\sqrt{2}n+4.102)$
    \\ \hline
	\end{tabular}}
	
	\label{table3}
\end{table}

\begin{lemma}\label{l-35}\cite{ghas2018,ggda2017}
$G\in CU_{n}$ and $n_{1}(G)\leq 2$ if and only if $G$ belongs to one of equivalence classes given in Table \ref{table2}.
\end{lemma}

\begin{theorem}\label{t-36}
If $n\geq 7$, $G_{1}\in \alpha_{1}$, $G_{2}\in \alpha_{3}$, $G_{3}\in \alpha_{2}$, $G_{4}\in \alpha_{9}$ in Table \ref{table3}. $G\in CU_{n}\setminus \{G_{1},G_{2},G_{3},G_{4}\}$, then
$SO(G_{1})<SO(G_{2})<SO(G_{3})<SO(G_{4})<SO(G)$.
\end{theorem}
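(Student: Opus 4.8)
The plan is to reduce the problem, via the structural transformations of Section~2, to the finite list of degree distributions of Lemma~\ref{l-35} and then read off the ordering from Table~\ref{table3}. First I would dispose of the case $n_1(G)\ge 3$. For a unicyclic graph $\sum d_v=2n$, and the degree equation gives $n_1(G)=n_3(G)+2n_4(G)$; so $n_1\ge 3$ forces $n_3+n_4\ge 2$, i.e. the graph carries pendant paths hanging off vertices of degree $\ge 3$ or off the cycle. Using Lemma~\ref{l-21} (which moves a pendant path from a degree-$1$-or-$2$ vertex to the end of another pendant path, strictly decreasing $SO$) and Lemma~\ref{l-24} (which does the analogous move at a degree-$2$-or-$3$ vertex $x$ on or near the cycle), any graph with $n_1\ge 3$ can be pushed to one with $n_1\le 2$ with strictly smaller Sombor index; hence no graph with $n_1\ge 3$ can be among the four minimum graphs, and in fact every such graph has $SO$ strictly larger than that of its image, which lies in Table~\ref{table3}.

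Next, for $G\in CU_n$ with $n_1(G)\le 2$, Lemma~\ref{l-35} says $G$ lies in one of the classes $A_1$--$A_4$ of Table~\ref{table2}. Within each degree-distribution class the Sombor index depends only on the edge-type multiplicities $m_{i,j}$, and Table~\ref{table3} enumerates all realizable multiplicity vectors together with the resulting value $SO(G)=2\sqrt2\,n+c$ for an explicit constant $c=c_{\alpha_i}$. I would then simply compare the constants: the four smallest are $c_{\alpha_1}=0<c_{\alpha_3}\approx 1.739<c_{\alpha_2}\approx 1.888<c_{\alpha_9}\approx 3.338$, and every other entry in the table (and every graph reached by collapsing an $n_1\ge 3$ graph) has a strictly larger constant. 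Thus $SO(G_1)<SO(G_2)<SO(G_3)<SO(G_4)<SO(G)$ for all remaining $G$. One should also note that for $n\ge 7$ each of the listed classes $\alpha_1,\alpha_3,\alpha_2,\alpha_9$ is nonempty, so the minimizers actually exist.

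A few technical points need care. First, the entries of Table~\ref{table3} must be verified to be exhaustive: for each of $A_1,\dots,A_4$ one checks the linear system relating $\sum_j m_{i,j}$ to the prescribed $n_i$'s and lists all nonnegative integer solutions that are graph-realizable; this is the content behind the table and I would cite it as given. Second, in the reduction step one must make sure the transformations of Lemmas~\ref{l-21} and~\ref{l-24} never destroy the unicyclic structure nor push the maximum degree above $4$ — both lemmas only relocate pendant paths and re-attach them at endpoints, so the cycle is preserved and no vertex degree increases, keeping us inside $CU_n$. Third, one must confirm that the chain of strict inequalities among the table constants is genuinely strict (the decimal approximations suffice since the gaps are of order $0.1$ or larger).

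The main obstacle is the exhaustiveness and realizability bookkeeping hidden in Tables~\ref{table2} and~\ref{table3}: one must be certain that no degree distribution with $n_1\le 2$ is missed (handled by Lemma~\ref{l-35}) and that within each distribution every feasible edge-multiplicity pattern is listed, because a single omitted class with a small constant would break the claimed ordering. Once that enumeration is trusted, the remainder is a routine comparison of twelve explicit real numbers plus the straightforward transformation argument for $n_1\ge 3$.
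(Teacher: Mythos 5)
Your proposal is correct and follows essentially the same route as the paper: reduce any $G$ with $n_{1}(G)\geq 3$ to a graph with $n_{1}\leq 2$ by the transformations of Lemmas \ref{l-21} and \ref{l-24}, then invoke Lemma \ref{l-35} and compare the constants in Table \ref{table3}, whose four smallest entries are $\alpha_{1},\alpha_{3},\alpha_{2},\alpha_{9}$. Your added details (the identity $n_{1}=n_{3}+2n_{4}$, preservation of the unicyclic structure and of $\Delta\leq 4$, and the exhaustiveness of the table) are exactly the points the paper leaves implicit.
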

\begin{proof}
By Table \ref{table3}, we have $SO(G_{1})<SO(G_{2})<SO(G_{3})<SO(G_{4})$.

If $n_{1}(G)\leq 2$, by Table \ref{table3}, the conclusion holds. If $n_{1}(G)\geq 3$, by the transformations of Lemma \ref{l-21} and Lemma \ref{l-24}, we can obtain a chemical unicyclic graphs $G^{*}$ with $n_{1}(G^{*})= 2$, so we have $SO(G)>SO(G^{*})$. By Table \ref{table3}, $SO(G_{4})\leq SO(G^{*})$. Thus, the conclusion holds.
\end{proof}

Similar to the proof of Theorem \ref{t-36}, we have
\begin{theorem}\label{t-361}
If $n\geq 7$, $G_{1}\in \alpha_{1}$, $G_{2}\in \alpha_{3}$, $G_{3}\in \alpha_{2}$, $G_{4}\in \alpha_{9}$ in Table \ref{table3}. $G\in CU_{n}\setminus \{G_{1},G_{2},G_{3},G_{4}\}$, then
$SO_{red}(G_{1})<SO_{red}(G_{2})<SO_{red}(G_{3})<SO_{red}(G_{4})<SO_{red}(G)$.
\end{theorem}

\subsection{Chemical bicyclic graphs}

In this subsection, we consider the extremal chemical bicyclic graphs with respect to (reduced) Sombor index.

\begin{table}[h]
	\centering
    \caption{Degree distributions $(DD)$ of $CB_{n}$ with $n_{1}\leq 1$.}
	\begin{tabular}{c|cccc}\hline
	           &	 $n_{4}$   &  $n_{3}$   & $n_{2}$   & $n_{1}$   \\ \hline
	$B_{1}$    &     $1$   &	  $0$   &	  $n-1$   &	  $0$        \\ \hline 
    $B_{2}$    &     $0$   &	  $2$   &	  $n-2$   &	  $0$        \\ \hline
    $B_{3}$    &     $1$   &	  $1$   &	  $n-3$   &	  $1$        \\ \hline
    $B_{4}$    &     $0$   &	  $3$   &	  $n-4$   &	  $1$        \\ \hline
	\end{tabular}
	
	\label{table4}
\end{table}

\begin{table}[h]
	\centering
    \caption{$CB_{n}$ with $n_{1}\leq 1$ and their (reduced)Sombor index.}
    \setlength{\tabcolsep}{1mm}{
	\begin{tabular}{c|ccccccccccc}\hline
	       &  $DD$  & $m_{1,2}$  &	$m_{1,3}$   &  $m_{1,4}$   & $m_{2,3}$   &	 $m_{2,4}$  &  $m_{3,3}$  &  $m_{3,4}$  & $m_{2,2}$   &	 $SO(G)$ &	 $SO_{red}(G)$ \\
    \hline
	$\beta_{1}$    &  $B_{1}$  &    $0$  &    $0$   &	  $0$  &      $0$   &     $4$   &	  $0$  &    $0$   &     $n-3$   &     $(2\sqrt{2}n+9.403)$ &     $(\sqrt{2}n+8.406)$ \\

    \hline
    $\beta_{2}$    &  $B_{2}$  &    $0$  &	  $0$   &	  $0$  &	  $4$   &	  $0$   &	  $1$  &	  $0$   &	$n-4$   &	  $(2\sqrt{2}n+7.351)$ &     $(\sqrt{2}n+6.115)$ \\ \hline

    $\beta_{3}$    &  $B_{2}$  &    $0$  &	  $0$   &	  $0$  &	  $6$   &	  $0$   &	  $0$  &	  $0$   &	  $n-5$   &	  $(2\sqrt{2}n+7.491)$ &     $(\sqrt{2}n+6.345)$ \\ \hline

    $\beta_{4}$    &  $B_{3}$  &    $0$  &	  $0$   &	  $1$  &	  $2$   &	  $2$   &	  $0$  &	  $1$   &	  $n-5$   &	  $(2\sqrt{2}n+11.136)$ &     $(\sqrt{2}n+10.331)$ \\ \hline

    $\beta_{5}$    &  $B_{3}$  &    $1$  &	  $0$   &	  $0$  &	  $2$   &	  $3$   &	  $0$  &	  $1$   &	  $n-6$   &	  $(2\sqrt{2}n+10.893)$ &     $(\sqrt{2}n+10.079)$ \\ \hline

    $\beta_{6}$    &  $B_{3}$  &    $0$  &	  $0$   &	  $1$  &	  $3$   &	  $3$   &	  $0$  &	  $0$   &	  $n-6$   &	  $(2\sqrt{2}n+11.385)$ &     $(\sqrt{2}n+10.709)$ \\ \hline

    $\beta_{7}$    &  $B_{3}$  &    $1$  &	  $0$   &	  $0$  &	  $3$   &	  $4$   &	  $0$  &	  $0$   &	  $n-7$   &	  $(2\sqrt{2}n+11.142)$ &     $(\sqrt{2}n+10.457)$ \\ \hline

    $\beta_{8}$    &  $B_{4}$  &    $0$  &	  $1$   &	  $0$  &	  $2$   &	  $0$   &	  $3$  &	  $0$   &	  $n-5$   &	  $(2\sqrt{2}n+8.959)$ &     $(\sqrt{2}n+7.886)$ \\ \hline

    $\beta_{9}$    &  $B_{4}$  &    $1$  &	  $0$   &	  $0$  &	  $3$   &	  $0$   &	  $3$  &	  $0$   &	  $n-6$  &	  $(2\sqrt{2}n+8.810)$ &     $(\sqrt{2}n+7.708)$ \\ \hline

    $\beta_{10}$   &  $B_{4}$  &    $0$  &	  $1$   &	  $0$  &	  $4$   &	  $0$   &	  $2$  &	  $0$   &	  $n-6$   &	  $(2\sqrt{2}n+9.099)$ &     $(\sqrt{2}n+8.115)$ \\ \hline

    $\beta_{11}$   &  $B_{4}$  &    $1$  &	  $0$   &	  $0$  &	  $5$   &	  $0$   &	  $2$  &	  $0$   &	  $n-7$  &	  $(2\sqrt{2}n+8.950)$ &     $(\sqrt{2}n+7.937)$ \\ \hline

    $\beta_{12}$   &  $B_{4}$  &    $0$  &	  $1$   &	  $0$  &	  $6$   &	  $0$   &	  $1$  &	  $0$   &	  $n-7$   &	  $(2\sqrt{2}n+9.239)$ &     $(\sqrt{2}n+8.345)$ \\ \hline

    $\beta_{13}$   &  $B_{4}$  &    $1$  &	  $0$   &	  $0$  &	  $7$   &	  $0$   &	  $1$  &	  $0$   &	  $n-8$   &	  $(2\sqrt{2}n+9.090)$ &     $(\sqrt{2}n+8.167)$ \\ \hline

    $\beta_{14}$   &  $B_{4}$  &    $0$  &	  $1$   &	  $0$  &	  $8$   &	  $0$   &	  $0$  &	  $0$   &	  $n-8$  &	  $(2\sqrt{2}n+9.379)$ &     $(\sqrt{2}n+8.574)$ \\ \hline

    $\beta_{15}$   &  $B_{4}$  &    $1$  &	  $0$   &	  $0$  &	  $9$   &	  $0$   &	  $0$  &	  $0$   &	  $n-9$   &	  $(2\sqrt{2}n+9.230)$ &     $(\sqrt{2}n+8.396)$ \\ \hline
	\end{tabular}}
	
	\label{table5}
\end{table}

\begin{lemma}\label{l-37}\cite{ghas2018,ggda2017}
$G\in CB_{n}$ and $n_{1}(G)\leq 1$ if and only if $G$ belongs to one of equivalence classes given in Table \ref{table4}.
\end{lemma}

\begin{theorem}\label{t-38}
If $n\geq 6$, $G_{1}\in \beta_{2}$, $G_{2}\in \beta_{3}$, $G_{3}\in \beta_{9}$ in Table \ref{table5}. $G\in CB_{n}\setminus \{G_{1},G_{2},G_{3}\}$, then
$SO(G_{1})<SO(G_{2})<SO(G_{3})<SO(G)$.
\end{theorem}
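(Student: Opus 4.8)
\textbf{Proof proposal for Theorem \ref{t-38}.}

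The plan is to mirror the structure of the earlier ordering theorems (Theorems \ref{t-34} and \ref{t-36}): first verify the numerical ordering among the candidate graphs using the explicit Sombor values tabulated in Table \ref{table5}, then show that every other chemical bicyclic graph has a strictly larger Sombor index by reducing to the classified case $n_{1}(G)\le 1$ via the transformation lemmas. Concretely, I would begin by reading off from Table \ref{table5} that $SO(\beta_{2}) < SO(\beta_{3}) < SO(\beta_{9})$ (since $2\sqrt{2}n+7.35\ldots < 2\sqrt{2}n+7.49\ldots < 2\sqrt{2}n+8.81\ldots$), and also that $SO(\beta_{9})$ is strictly less than $SO(\beta_{i})$ for every other row $i$ of Table \ref{table5}; this handles all $G$ with $n_{1}(G)\le 1$ by Lemma \ref{l-37}, which guarantees that such $G$ lies in one of the classes $B_{1},B_{2},B_{4},B_{5}$, and within each class the Sombor index is constant because $m_{i,j}(G)$ is fixed.

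The substantive part is the case $n_{1}(G)\ge 2$. Here I would argue that repeated application of Lemma \ref{l-21} (which removes a pendant path hanging off a low-degree vertex and re-attaches it to the end of another pendant path, strictly decreasing $SO$) and Lemma \ref{l-24} (the analogous move for paths attached at two vertices of degree $2$ or $3$) transforms $G$ into a chemical bicyclic graph $G^{*}$ with $n_{1}(G^{*})\le 1$, while strictly decreasing the Sombor index at each step: $SO(G) > SO(G^{*})$. One must check that these transformations preserve the bicyclic structure (they only move pendant paths, never touch the two independent cycles or the cyclomatic number) and that they genuinely reduce the number of pendant vertices until at most one remains; this termination argument is essentially the same as in the proof of Theorem \ref{t-36}. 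Then by the first paragraph $SO(G^{*}) \ge SO(\beta_{9}) = SO(G_{3})$, so $SO(G) > SO(G_{3})$, completing the proof.

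I expect the main obstacle to be the bookkeeping in the reduction step: one has to confirm that Lemmas \ref{l-21} and \ref{l-24} are actually applicable whenever $n_{1}(G)\ge 2$ in a bicyclic graph — i.e.\ that there always exists a pendant path attached at a vertex of the required degree ($1$ or $2$ for Lemma \ref{l-21}, or $2$ or $3$ for Lemma \ref{l-24}) so that at least one pendant vertex can be eliminated without creating a vertex of degree exceeding $4$. A careful case analysis on the structure around the pendant vertices of $G$ is needed here, but it is routine given the degree constraints of chemical graphs; the inequalities themselves are already supplied by the lemmas, so no new estimates are required beyond reading Table \ref{table5}.
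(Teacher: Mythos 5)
Your proposal follows essentially the same route as the paper's proof: read off the ordering $SO(\beta_{2})<SO(\beta_{3})<SO(\beta_{9})$ and the minimality of $\beta_{9}$ among the remaining rows of Table \ref{table5} (covering $n_{1}(G)\leq 1$ via Lemma \ref{l-37}), then for $n_{1}(G)\geq 2$ reduce to a graph $G^{*}$ with $n_{1}(G^{*})\leq 1$ by the transformations of Lemmas \ref{l-21} and \ref{l-24}, each of which strictly decreases $SO$. The applicability and termination concerns you raise about the reduction step are legitimate, but the paper simply asserts that step without further detail, so your treatment is if anything slightly more careful than the original.
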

\begin{proof}
By Table \ref{table5}, we have $SO(G_{1})<SO(G_{2})<SO(G_{3})$.

If $n_{1}(G)\leq 1$, by Table \ref{table5}, the conclusion holds. If $n_{1}(G)\geq 2$, by the transformations of Lemma \ref{l-21} and Lemma \ref{l-24}, we can obtain a chemical bicyclic graphs $G^{*}$ with $n_{1}(G^{*})= 1$, so we have $SO(G)>SO(G^{*})$. By Table \ref{table5}, $SO(G_{3})\leq SO(G^{*})$. Thus, the conclusion holds.
\end{proof}

Similar to the proof of Theorem \ref{t-38}, we have
\begin{theorem}\label{t-381}
If $n\geq 6$, $G_{1}\in \beta_{2}$, $G_{2}\in \beta_{3}$, $G_{3}\in \beta_{9}$ in Table \ref{table5}. $G\in CB_{n}\setminus \{G_{1},G_{2},G_{3}\}$, then
$SO_{red}(G_{1})<SO_{red}(G_{2})<SO_{red}(G_{3})<SO_{red}(G)$.
\end{theorem}

\subsection{Chemical tricyclic graphs}

In this subsection, we consider the extremal chemical tricyclic graphs with respect to (reduced) Sombor index.

\begin{table}[h]
	\centering
    \caption{Degree distributions $(DD)$ of $CTG_{n}$ with $n_{1}\leq 1$.}
	\begin{tabular}{c|cccc}\hline
	           &	 $n_{4}$   &  $n_{3}$   & $n_{2}$   & $n_{1}$   \\ \hline
	$E_{1}$    &     $2$   &	  $0$   &	  $n-2$   &	  $0$        \\ \hline 
    $E_{2}$    &     $1$   &	  $2$   &	  $n-3$   &	  $0$        \\ \hline
    $E_{3}$    &     $0$   &	  $4$   &	  $n-4$   &	  $0$        \\ \hline
    $E_{4}$    &     $2$   &	  $1$   &	  $n-4$   &	  $1$        \\ \hline
    $E_{5}$    &     $1$   &	  $3$   &	  $n-5$   &	  $1$        \\ \hline
    $E_{6}$    &     $0$   &	  $5$   &	  $n-6$   &	  $1$        \\ \hline
	\end{tabular}
	
	\label{table6}
\end{table}

\begin{table}[!htb]
	\centering
    \caption{$CTG_{n}$ with $n_{1}\leq 1$ and their (reduced)Sombor index.}
    \setlength{\tabcolsep}{0.5mm}{
	\begin{tabular}{c|cccccccccccc}\hline
	       & $DD$  & $m_{1,2}$ & $m_{1,3}$ & $m_{1,4}$ &  $m_{2,3}$   & $m_{2,4}$   &  $m_{3,3}$  &  $m_{3,4}$  &  $m_{4,4}$  & $m_{2,2}$   & $SO(G)$ & $SO_{red}(G)$\\
    \hline

	$\gamma_{1}$    &  $E_{1}$  &   $0$  &   $0$  &    $0$   &	  $0$  &      $8$   &     $0$   &	  $0$  &    $0$   &     $n-6$   & $(2\sqrt{2}n+18.806)$ & $(\sqrt{2}n+16.812)$ \\ \hline

    $\gamma_{2}$    &  $E_{1}$  &   $0$ &    $0$  &	  $0$   &	  $0$  &	  $6$   &	  $0$   &	  $0$  &	 $1$   &	$n-5$   &	 $(2\sqrt{2}n+18.347)$ & $(\sqrt{2}n+16.145)$  \\ \hline

    $\gamma_{3}$    &  $E_{2}$  &   $0$ &    $0$  &	  $0$   &	  $2$  &	  $2$   &	  $1$   &	  $2$  &	  $0$   &	  $n-5$   &	  $(2\sqrt{2}n+16.255)$ & $(\sqrt{2}n+13.765)$   \\ \hline

    $\gamma_{4}$    &  $E_{2}$  &   $0$ &    $0$  &	  $0$   &	  $3$  &	  $3$   &	  $1$   &	  $1$  &	  $0$   &	  $n-6$   &	  $(2\sqrt{2}n+16.505)$ & $(\sqrt{2}n+14.143)$  \\ \hline

    $\gamma_{5}$    &  $E_{2}$  &   $0$ &    $0$  &	  $0$   &	  $4$  &	  $4$   &	  $1$   &	  $0$  &	  $0$   &	  $n-7$   &	  $(2\sqrt{2}n+16.754)$ & $(\sqrt{2}n+14.522)$  \\ \hline

    $\gamma_{6}$    &  $E_{2}$  &   $0$ &    $0$  &	  $0$   &	  $4$  &	  $2$   &	  $0$   &	  $2$  &	  $0$   &	  $n-6$   &	  $(2\sqrt{2}n+16.395)$ & $(\sqrt{2}n+13.994)$  \\ \hline

    $\gamma_{7}$    &  $E_{2}$  &   $0$ &    $0$  &	  $0$   &	  $5$  &	  $3$   &	  $0$   &	  $1$  &	  $0$   &	  $n-7$   &	  $(2\sqrt{2}n+16.645)$ & $(\sqrt{2}n+14.373)$  \\ \hline

    $\gamma_{8}$    &  $E_{2}$  &   $0$ &    $0$  &	  $0$   &	  $6$  &	  $4$   &	  $0$   &	  $0$  &	  $0$   &	  $n-8$   &	  $(2\sqrt{2}n+16.894)$ & $(\sqrt{2}n+14.751)$  \\ \hline

    $\gamma_{9}$    &  $E_{3}$  &   $0$ &    $0$  &	  $0$   &	  $2$  &	  $0$   &	  $5$   &	  $0$  &	  $0$   &	  $n-5$  &	  $(2\sqrt{2}n+14.282)$ & $(\sqrt{2}n+11.543)$  \\ \hline

    $\gamma_{10}$   &  $E_{3}$  &   $0$ &    $0$  &	  $0$   &	  $4$  &	  $0$   &	  $4$   &	  $0$  &	  $0$   &	  $n-6$   &	  $(2\sqrt{2}n+14.422)$ & $(\sqrt{2}n+11.772)$  \\ \hline

    $\gamma_{11}$   &  $E_{3}$  &   $0$ &    $0$  &	  $0$   &	  $6$  &	  $0$   &	  $3$   &	  $0$  &	  $0$   &	  $n-7$  &	  $(2\sqrt{2}n+14.562)$ & $(\sqrt{2}n+12.002)$  \\ \hline

    $\gamma_{12}$   &  $E_{3}$  &   $0$ &    $0$  &	  $0$   &	  $8$  &	  $0$   &	  $2$   &	  $0$  &	  $0$   &	  $n-8$   &	  $(2\sqrt{2}n+14.702)$ & $(\sqrt{2}n+12.231)$  \\ \hline

    $\gamma_{13}$   &  $E_{3}$  &   $0$ &    $0$  &	  $0$   &	  $10$  &	  $0$   &	  $1$   &	  $0$  &	  $0$   &	  $n-9$   &	  $(2\sqrt{2}n+14.842)$ & $(\sqrt{2}n+12.461)$   \\ \hline

    $\gamma_{14}$   &  $E_{3}$  &   $0$ &    $0$  &	  $0$   &	  $12$  &	  $0$   &	  $0$   &	  $0$  &	  $0$   &	  $n-10$  &	  $(2\sqrt{2}n+14.982)$ & $(\sqrt{2}n+12.690)$  \\ \hline

    $\gamma_{15}$   &  $E_{4}$  &   $0$ &    $0$  &	  $1$   &	  $1$  &	  $3$   &	  $0$   &	  $2$  &	  $1$   &	  $n-6$   &	  $(2\sqrt{2}n+19.831)$ & $(\sqrt{2}n+17.691)$  \\ \hline

    $\gamma_{16}$    &  $E_{4}$  &   $1$ &    $0$  &    $0$   &	  $1$  &      $4$   &     $0$   &	  $2$  &    $1$   &     $n-7$   & $(2\sqrt{2}n+19.588)$ & $(\sqrt{2}n+17.439)$  \\ \hline

    $\gamma_{17}$    &  $E_{4}$  &   $0$ &    $0$  &	  $1$   &	  $2$  &	  $4$   &	  $0$   &	  $1$  &	  $1$   &	$n-7$   &	$(2\sqrt{2}n+20.080)$ & $(\sqrt{2}n+18.069)$  \\ \hline

    $\gamma_{18}$    &  $E_{4}$  &   $1$ &    $0$  &	  $0$   &	  $2$  &	  $5$   &	  $0$   &	  $1$  &	  $1$   &	  $n-8$   &	  $(2\sqrt{2}n+19.837)$  & $(\sqrt{2}n+17.818)$ \\ \hline

    $\gamma_{19}$    &  $E_{4}$  &   $0$ &    $0$  &	  $1$   &	  $3$  &	  $5$   &	  $0$   &	  $0$  &	  $1$   &	  $n-8$   &	  $(2\sqrt{2}n+20.329)$ & $(\sqrt{2}n+18.448)$  \\ \hline

    $\gamma_{20}$    &  $E_{4}$  &   $1$ &    $0$  &	  $0$   &	  $3$  &	  $6$   &	  $0$   &	  $0$  &	  $1$   &	  $n-9$   &	  $(2\sqrt{2}n+20.086)$  & $(\sqrt{2}n+18.196)$ \\ \hline

    $\gamma_{21}$    &  $E_{4}$  &   $0$ &    $0$  &	  $1$   &	  $1$  &	  $5$   &	  $0$   &	  $2$  &	  $0$   &	  $n-7$   &	  $(2\sqrt{2}n+20.290)$  & $(\sqrt{2}n+18.359)$ \\ \hline

    $\gamma_{22}$    &  $E_{4}$  &   $1$ &    $0$  &	  $0$   &	  $1$  &	  $6$   &	  $0$   &	  $2$  &	  $0$   &	  $n-8$   &	  $(2\sqrt{2}n+20.047)$  & $(\sqrt{2}n+18.107)$ \\ \hline

    $\gamma_{23}$    &  $E_{4}$  &   $0$ &    $0$  &	  $1$   &	  $2$  &	  $6$   &	  $0$   &	  $1$  &	  $0$   &	  $n-8$   &	  $(2\sqrt{2}n+20.539)$ & $(\sqrt{2}n+18.737)$  \\ \hline

    $\gamma_{24}$    &  $E_{4}$  &   $1$ &    $0$  &	  $0$   &	  $2$  &	  $7$   &	  $0$   &	  $1$  &	  $0$   &	  $n-9$  &	  $(2\sqrt{2}n+20.296)$  & $(\sqrt{2}n+18.485)$ \\ \hline

    $\gamma_{25}$   &  $E_{4}$  &   $0$ &    $0$  &	  $1$   &	  $3$  &	  $7$   &	  $0$   &	  $0$  &	  $0$   &	  $n-9$   &	  $(2\sqrt{2}n+20.788)$ & $(\sqrt{2}n+16.116)$  \\ \hline

    $\gamma_{26}$   &  $E_{4}$  &   $1$ &    $0$  &	  $0$   &	  $3$  &	  $8$   &	  $0$   &	  $0$  &	  $0$   &	  $n-10$  &	  $(2\sqrt{2}n+20.545)$ & $(\sqrt{2}n+18.864)$  \\ \hline

    $\gamma_{27}$   &  $E_{5}$  &   $0$ &    $0$  &	  $1$   &	  $2$  &	  $0$   &	  $2$   &	  $3$  &	  $0$   &	  $n-6$   &	  $(2\sqrt{2}n+17.848)$ & $(\sqrt{2}n+15.460)$  \\ \hline

    $\gamma_{28}$   &  $E_{5}$  &   $0$ &    $0$  &	  $1$   &	  $4$  &	  $0$   &	  $1$   &	  $3$  &	  $0$   &	  $n-7$   &	  $(2\sqrt{2}n+17.988)$ & $(\sqrt{2}n+15.689)$  \\ \hline

    $\gamma_{29}$   &  $E_{5}$  &   $0$ &    $0$  &	  $1$   &	  $6$  &	  $0$   &	  $0$   &	  $3$  &	  $0$   &	  $n-8$  &	  $(2\sqrt{2}n+18.128)$ & $(\sqrt{2}n+15.919)$  \\ \hline

    $\gamma_{30}$   &  $E_{5}$  &   $0$ &    $0$  &	  $1$   &	  $1$  &	  $1$   &	  $3$   &	  $2$  &	  $0$   &	  $n-6$   &	  $(2\sqrt{2}n+17.958)$ & $(\sqrt{2}n+15.609)$  \\ \hline

    $\gamma_{31}$    &  $E_{5}$  &   $0$ &    $0$  &    $1$   &	  $3$  &      $1$   &     $2$   &	  $2$  &    $0$   &     $n-7$   &
     $(2\sqrt{2}n+18.098)$ & $(\sqrt{2}n+15.838)$  \\ \hline

    $\gamma_{32}$    &  $E_{5}$  &   $0$ &    $0$  &	  $1$   &	  $5$  &	  $1$   &	  $1$   &	  $2$  &	  $0$   &	$n-8$   &	  $(2\sqrt{2}n+18.238)$  & $(\sqrt{2}n+16.068)$ \\ \hline

    $\gamma_{33}$    &  $E_{5}$  &   $0$  &    $0$  &	  $1$   &	  $7$  &	  $1$   &	  $0$   &	  $2$  &	  $0$   &	  $n-9$   &	  $(2\sqrt{2}n+18.378)$  & $(\sqrt{2}n+16.297)$ \\ \hline

    $\gamma_{34}$    &  $E_{5}$  &   $0$ &    $0$  &	  $1$   &	  $2$  &	  $2$   &	  $3$   &	  $1$  &	  $0$   &	  $n-7$   &	  $(2\sqrt{2}n+18.207)$  & $(\sqrt{2}n+15.988)$ \\ \hline

    $\gamma_{35}$    &  $E_{5}$  &   $0$ &    $0$  &	  $1$   &	  $4$  &	  $2$   &	  $2$   &	  $1$  &	  $0$   &	  $n-8$   &	  $(2\sqrt{2}n+18.347)$ & $(\sqrt{2}n+16.217)$  \\ \hline

    $\gamma_{36}$    &  $E_{5}$  &   $0$  &    $0$  &	  $1$   &	  $6$  &	  $2$   &	  $1$   &	  $1$  &	  $0$   &	  $n-9$   &	  $(2\sqrt{2}n+18.487)$  & $(\sqrt{2}n+16.447)$ \\ \hline

    $\gamma_{37}$    &  $E_{5}$  &   $0$  &   $0$  &    $1$   &	  $8$  &      $2$   &     $0$   &	  $1$  &    $0$   &     $n-10$   &
     $(2\sqrt{2}n+18.627)$ & $(\sqrt{2}n+16.676)$ \\ \hline

    $\gamma_{38}$    &  $E_{5}$  &   $0$ &    $0$  &	  $1$   &	  $3$  &	  $3$   &	  $3$   &	  $0$  &	 $0$   &	$n-8$   &	  $(2\sqrt{2}n+18.456)$ & $(\sqrt{2}n+16.366)$ \\ \hline

    $\gamma_{39}$    &  $E_{5}$  &   $0$ &    $0$  &	  $1$   &	  $5$  &	  $3$   &	  $2$   &	  $0$  &	  $0$   &	  $n-9$   &	  $(2\sqrt{2}n+18.596)$ & $(\sqrt{2}n+16.596)$ \\ \hline

    $\gamma_{40}$    &  $E_{5}$  &   $0$ &    $0$  &	  $1$   &	  $7$  &	  $3$   &	  $1$   &	  $0$  &	  $0$   &	  $n-10$   &	  $(2\sqrt{2}n+18.736)$ & $(\sqrt{2}n+16.825)$ \\ \hline

	\end{tabular}}
	
	\label{table7}
\end{table}

\begin{table}[!htb]
	\centering
    \caption{$CTG_{n}$ with $n_{1}\leq 1$ and their (reduced)Sombor index.}
    \setlength{\tabcolsep}{0.5mm}{
	\begin{tabular}{c|cccccccccccc}\hline
	       & $DD$  & $m_{1,2}$ & $m_{1,3}$ & $m_{1,4}$ &  $m_{2,3}$   & $m_{2,4}$   &  $m_{3,3}$  &  $m_{3,4}$  &  $m_{4,4}$  & $m_{2,2}$   & $SO(G)$ & $SO_{red}(G)$\\
    \hline

    $\gamma_{41}$    &  $E_{5}$  &   $0$ &    $0$  &	  $1$   &	  $9$  &	  $3$   &	  $0$   &	  $0$  &	  $0$   &	  $n-11$   &	  $(2\sqrt{2}n+18.876)$ & $(\sqrt{2}n+17.055)$ \\ \hline

    $\gamma_{42}$    &  $E_{5}$  &   $1$ &    $0$  &	  $0$   &	  $0$  &	  $1$   &	  $3$   &	  $3$  &	  $0$   &	  $n-6$   &	  $(2\sqrt{2}n+17.465)$ & $(\sqrt{2}n+14.978)$ \\ \hline

    $\gamma_{43}$    &  $E_{5}$  &   $1$ &    $0$  &	  $0$   &	  $2$  &	  $1$   &	  $2$   &	  $3$  &	  $0$   &	  $n-7$   &	  $(2\sqrt{2}n+17.605)$ & $(\sqrt{2}n+15.208)$ \\ \hline

    $\gamma_{44}$    &  $E_{5}$  &   $1$ &    $0$  &	  $0$   &	  $4$  &	  $1$   &	  $1$   &	  $3$  &	  $0$   &	  $n-8$   &	  $(2\sqrt{2}n+17.745)$ & $(\sqrt{2}n+15.437)$ \\ \hline

    $\gamma_{45}$    &  $E_{5}$  &   $1$ &    $0$  &	  $0$   &	  $6$  &	  $1$   &	  $0$   &	  $3$  &	  $0$   &	  $n-9$  &	  $(2\sqrt{2}n+17.885)$  & $(\sqrt{2}n+15.667)$ \\ \hline

    $\gamma_{46}$   &  $E_{5}$  &   $1$ &    $0$  &	  $0$   &	  $1$  &	  $2$   &	  $3$   &	  $2$  &	  $0$   &	  $n-7$   &	  $(2\sqrt{2}n+17.714)$ & $(\sqrt{2}n+15.357)$ \\ \hline

    $\gamma_{47}$   &  $E_{5}$  &   $1$ &    $0$  &	  $0$   &	  $3$  &	  $2$   &	  $2$   &	  $2$  &	  $0$   &	  $n-8$  &	  $(2\sqrt{2}n+17.854)$ & $(\sqrt{2}n+15.587)$ \\ \hline

    $\gamma_{48}$   &  $E_{5}$  &   $1$ &    $0$  &	  $0$   &	  $5$  &	  $2$   &	  $1$   &	  $2$  &	  $0$   &	  $n-9$   &	  $(2\sqrt{2}n+17.994)$ & $(\sqrt{2}n+15.816)$ \\ \hline

    $\gamma_{49}$   &  $E_{5}$  &   $1$ &    $0$  &	  $0$   &	  $7$  &	  $2$   &	  $0$   &	  $2$  &	  $0$   &	  $n-10$   &	  $(2\sqrt{2}n+18.134)$ & $(\sqrt{2}n+16.045)$ \\ \hline

    $\gamma_{50}$   &  $E_{5}$  &   $1$ &    $0$  &	  $0$   &	  $2$  &	  $3$   &	  $3$   &	  $1$  &	  $0$   &	  $n-8$  &	  $(2\sqrt{2}n+17.964)$ & $(\sqrt{2}n+15.736)$ \\ \hline

    $\gamma_{51}$   &  $E_{5}$  &   $1$ &    $0$  &	  $0$   &	  $4$  &	  $3$   &	  $2$   &	  $1$  &	  $1$   &	  $n-9$   &	  $(2\sqrt{2}n+18.104)$ & $(\sqrt{2}n+15.965)$ \\ \hline

    $\gamma_{52}$    &  $E_{5}$  &   $1$ &    $0$  &    $0$   &	  $6$  &      $3$   &     $1$   &	  $1$  &    $0$   &     $n-10$   &
     $(2\sqrt{2}n+18.244)$  & $(\sqrt{2}n+16.195)$\\ \hline

    $\gamma_{53}$    &  $E_{5}$  &   $1$ &    $0$  &	  $0$   &	  $8$  &	  $3$   &	  $0$   &	  $1$  &	  $0$   &	$n-11$   &	  $(2\sqrt{2}n+18.384)$ & $(\sqrt{2}n+16.424)$ \\ \hline

    $\gamma_{54}$    &  $E_{5}$  &   $1$ &    $0$  &	  $0$   &	  $3$  &	  $4$   &	  $3$   &	  $0$  &	  $0$   &	  $n-9$   &	  $(2\sqrt{2}n+18.213)$ & $(\sqrt{2}n+16.114)$ \\ \hline

    $\gamma_{55}$    &  $E_{5}$  &   $1$ &    $0$  &	  $0$   &	  $5$  &	  $4$   &	  $2$   &	  $0$  &	  $0$   &	  $n-10$   &	  $(2\sqrt{2}n+18.353)$ & $(\sqrt{2}n+16.344)$ \\ \hline

    $\gamma_{56}$    &  $E_{5}$  &   $1$ &    $0$  &	  $0$   &	  $7$  &	  $4$   &	  $1$   &	  $0$  &	  $0$   &	  $n-11$   &	  $(2\sqrt{2}n+18.493)$  & $(\sqrt{2}n+16.573)$\\ \hline

    $\gamma_{57}$    &  $E_{5}$  &   $1$ &    $0$  &	  $0$   &	  $9$  &	  $4$   &	  $0$   &	  $0$  &	  $0$   &	  $n-12$   &	  $(2\sqrt{2}n+18.633)$ & $(\sqrt{2}n+16.803)$ \\ \hline

    $\gamma_{58}$    &  $E_{6}$  &   $0$ &    $1$  &	  $0$   &	  $2$  &	  $0$   &	  $6$   &	  $0$  &	  $0$   &	  $n-7$   &	  $(2\sqrt{2}n+16.030)$  & $(\sqrt{2}n+13.543)$\\ \hline

    $\gamma_{59}$    &  $E_{6}$  &   $0$ &    $1$  &	  $0$   &	  $4$  &	  $0$   &	  $5$   &	  $0$  &	  $0$   &	  $n-8$   &	  $(2\sqrt{2}n+16.170)$  & $(\sqrt{2}n+13.772)$\\ \hline

    $\gamma_{60}$    &  $E_{6}$  &   $0$ &    $1$  &	  $0$   &	  $6$  &	  $0$   &	  $4$   &	  $0$  &	  $0$   &	  $n-9$  &	  $(2\sqrt{2}n+16.310)$  & $(\sqrt{2}n+14.002)$\\ \hline

    $\gamma_{61}$   &  $E_{6}$  &   $0$ &    $1$  &	  $0$   &	  $8$  &	  $0$   &	  $3$   &	  $0$  &	  $0$   &	  $n-10$   &	  $(2\sqrt{2}n+16.450)$  & $(\sqrt{2}n+14.231)$\\ \hline

    $\gamma_{62}$   &  $E_{6}$  &   $0$ &    $1$  &	  $0$   &	  $10$  &	  $0$   &	  $2$   &	  $0$  &	  $0$   &	  $n-11$  &	  $(2\sqrt{2}n+16.590)$ & $(\sqrt{2}n+14.461)$ \\ \hline

    $\gamma_{63}$   &  $E_{6}$  &   $0$ &    $1$  &	  $0$   &	  $12$  &	  $0$   &	  $1$   &	  $0$  &	  $0$   &	  $n-12$   &	  $(2\sqrt{2}n+16.730)$ & $(\sqrt{2}n+14.690)$ \\ \hline

    $\gamma_{64}$   &  $E_{6}$  &   $0$ &    $1$  &	  $0$   &	  $14$  &	  $0$   &	  $0$   &	  $0$  &	  $0$   &	  $n-13$   &	  $(2\sqrt{2}n+16.870)$  & $(\sqrt{2}n+14.920)$\\ \hline

    $\gamma_{65}$   &  $E_{6}$  &   $1$ &    $0$  &	  $0$   &	  $1$  &	  $0$   &	  $7$   &	  $0$  &	  $0$   &	  $n-7$  &	  $(2\sqrt{2}n+15.741)$ & $(\sqrt{2}n+13.135)$ \\ \hline

    $\gamma_{66}$   &  $E_{6}$  &   $1$ &    $0$  &	  $0$   &	  $3$  &	  $0$   &	  $6$   &	  $0$  &	  $0$   &	  $n-8$   &	  $(2\sqrt{2}n+15.881)$ & $(\sqrt{2}n+13.365)$ \\ \hline

    $\gamma_{67}$    &  $E_{6}$  &   $1$ &    $0$  &    $0$   &	  $5$  &      $0$   &     $5$   &	  $0$  &    $0$   &     $n-9$   &
     $(2\sqrt{2}n+16.021)$ & $(\sqrt{2}n+13.594)$ \\ \hline

    $\gamma_{68}$    &  $E_{6}$  &   $1$ &    $0$  &	  $0$   &	  $7$  &	  $0$   &	  $4$   &	  $0$  &	  $0$   &	$n-10$   &	  $(2\sqrt{2}n+16.161)$  & $(\sqrt{2}n+13.824)$\\ \hline

    $\gamma_{69}$    &  $E_{6}$  &   $1$  &    $0$  &	  $0$   &	  $9$  &	  $0$   &	  $3$   &	  $0$  &	  $0$   &	  $n-11$   &	  $(2\sqrt{2}n+16.301)$  & $(\sqrt{2}n+14.053)$\\ \hline

    $\gamma_{70}$    &  $E_{6}$  &   $1$ &    $0$  &	  $0$   &	  $11$  &	  $0$   &	  $2$   &	  $0$  &	  $0$   &	  $n-12$   &	  $(2\sqrt{2}n+16.441)$  & $(\sqrt{2}n+14.283)$\\ \hline

    $\gamma_{71}$    &  $E_{6}$  &   $1$ &    $0$  &	  $0$   &	  $13$  &	  $0$   &	  $1$   &	  $0$  &	  $0$   &	  $n-13$   &	  $(2\sqrt{2}n+16.581)$  & $(\sqrt{2}n+14.512)$\\ \hline

    $\gamma_{72}$    &  $E_{6}$  &   $1$  &    $0$  &	  $0$   &	  $15$  &	  $0$   &	  $0$   &	  $0$  &	  $0$   &	  $n-14$   &	  $(2\sqrt{2}n+16.721)$  & $(\sqrt{2}n+14.742)$\\ \hline
	\end{tabular}}
	
	\label{table8}
\end{table}

\begin{lemma}\label{l-39}\cite{ghas2018,ggda2017}
$G\in CTG_{n}$ and $n_{1}(G)\leq 1$ if and only if $G$ belongs to one of equivalence classes given in Table \ref{table6}.
\end{lemma}

\begin{theorem}\label{t-310}
If $n\geq 6$, $G_{1}\in \gamma_{9}$, $G_{2}\in \gamma_{10}$, $G_{3}\in \gamma_{11}$, $G_{4}\in \gamma_{12}$, $G_{5}\in \gamma_{13}$, $G_{6}\in \gamma_{14}$, $G_{7}\in \gamma_{65}$  in Table \ref{table7} and Table \ref{table8}. $G\in CTG_{n}\setminus \{G_{1},G_{2},G_{3},G_{4},G_{5},G_{6},G_{7}\}$, then
$SO(G_{1})<SO(G_{2})<SO(G_{3})<SO(G_{4})<SO(G_{5})<SO(G_{6})<SO(G_{7})<SO(G)$.
\end{theorem}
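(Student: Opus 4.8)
The plan is to split on the number $n_1(G)$ of pendant vertices, exactly as in Theorems \ref{t-36} and \ref{t-38}: the graphs with few pendant vertices are handled by the classification of Lemma \ref{l-39} together with the tabulated values of $SO$, and everything else is reduced to that finite list by the transformations of Section 2.

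The first step is purely computational. Since a tricyclic graph on $n$ vertices has $n+2$ edges, $m_{2,2}(G)=n+2-\sum_{(i,j)\ne(2,2)}m_{i,j}(G)$, and hence
$$SO(G)=2\sqrt2\,n+4\sqrt2+\sum_{(i,j)\ne(2,2)}\bigl(\sqrt{i^2+j^2}-2\sqrt2\bigr)\,m_{i,j}(G).$$
So within each of the six degree distributions of Lemma \ref{l-39} the value of $SO(G)$ is a constant (in $n$) depending only on the edge-type vector $(m_{i,j})$; enumerating the admissible such vectors produces the classes $\gamma_1,\gamma_2,\dots$ of Tables \ref{table7}--\ref{table8}, each with $SO=2\sqrt2\,n+c_\gamma$. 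I would then read off and compare the constants $c_\gamma$ to verify the chain $SO(G_1)<SO(G_2)<\dots<SO(G_7)$, that $\gamma_9,\dots,\gamma_{14}$ are the six smallest classes overall (all with $n_1=0$), that $\gamma_{65}$ realizes the minimum of $SO$ over all classes with $n_1=1$, and that every remaining class is strictly above $\gamma_{65}$.

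Given this, the case $n_1(G)\le 1$ is immediate: by Lemma \ref{l-39} such a $G$ lies in one of the tabulated classes, so if $G\notin\{G_1,\dots,G_7\}$ then, by the comparison just made, $G$ lies neither in $\gamma_9,\dots,\gamma_{14}$ nor in $\gamma_{65}$, and therefore $SO(G)>SO(G_7)$ (using that the $D_3$- and $D_4$-constants exceed $c_{\gamma_{65}}$ when $n_1=0$, and that $c_{\gamma_{65}}$ is minimal among the $n_1=1$ classes). For $n_1(G)\ge 2$ I would run the reduction: $G$ contains two vertex-disjoint pendant paths, and the vertex rooting each of them has degree $3$ or $4$ in $G$ (degree $2$ or $3$ after the path is deleted), so Lemma \ref{l-24} applies and its transformation produces a chemical tricyclic graph with one fewer pendant vertex and strictly smaller $SO$; iterating (invoking Lemma \ref{l-21} whenever a pendant path sits at a vertex of degree $\le 2$) one arrives at a chemical tricyclic $G^{*}$ with $n_1(G^{*})=1$ and $SO(G)>SO(G^{*})\ge SO(\gamma_{65})=SO(G_7)$.

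The bulk of the work — and the only real obstacle — is the bookkeeping in the first step: correctly listing every admissible edge-type vector for each of the six distributions (evidently at least $65$ classes once the $n_1=1$ distributions are counted), evaluating all the resulting constants, and checking both the internal order $SO(G_1)<\dots<SO(G_7)$ and the extremality of $\gamma_{65}$. Two points I would be careful to state explicitly in the reduction: each transformation of Section 2 preserves the cyclomatic number (only edges inside pendant trees are rewired) and the bound $\Delta\le 4$ (the only vertex whose degree rises is a former leaf, now of degree $2$), so we stay inside $CTG_n$; and the reduction halts precisely at $n_1=1$, because none of Lemmas \ref{l-21}--\ref{l-24} is applicable to a tricyclic graph with a single pendant path — which is exactly what keeps $G^{*}$ in an $E_5$-, $E_6$- or $E_7$-class rather than in a $D_5$-class with a smaller Sombor index.
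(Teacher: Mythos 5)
Your argument is essentially the paper's own proof: the authors likewise verify the chain $SO(G_{1})<\dots<SO(G_{7})$ directly from Tables \ref{table7} and \ref{table8}, dispose of the case $n_{1}(G)\leq 1$ by the classification of Lemma \ref{l-39}, and for $n_{1}(G)\geq 2$ apply the transformations of Lemmas \ref{l-21} and \ref{l-24} to reach a graph $G^{*}$ with $n_{1}(G^{*})=1$ satisfying $SO(G)>SO(G^{*})\geq SO(G_{7})$. Your version is in fact more careful than the published one, since you make explicit why the reduction preserves membership in $CTG_{n}$ and why it stops at $n_{1}=1$ rather than overshooting into a $D_{5}$-class.
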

\begin{proof}
By Table \ref{table7} and Table \ref{table8}, we have $SO(G_{1})<SO(G_{2})<SO(G_{3})<SO(G_{4})<SO(G_{5})<SO(G_{6})<SO(G_{7})$.

If $n_{1}(G)\leq 1$, by Table \ref{table7} and Table \ref{table8}, the conclusion holds. If $n_{1}(G)\geq 2$, by the transformations of Lemma \ref{l-21} and Lemma \ref{l-24}, we can obtain a chemical tricyclic graphs $G^{*}$ with $n_{1}(G^{*})= 1$, so we have $SO(G)>SO(G^{*})$. By Table \ref{table7} and Table \ref{table8}, $SO(G_{7})\leq SO(G^{*})$. Thus, the conclusion holds.
\end{proof}
\ \notag\

Similar to the proof of Theorem \ref{t-310}, we have
\begin{theorem}\label{t-311}
If $n\geq 6$, $G_{1}\in \gamma_{9}$, $G_{2}\in \gamma_{10}$, $G_{3}\in \gamma_{11}$, $G_{4}\in \gamma_{12}$, $G_{5}\in \gamma_{13}$, $G_{6}\in \gamma_{14}$, $G_{7}\in \gamma_{65}$  in Table \ref{table7} and Table \ref{table8}. $G\in CTG_{n}\setminus \{G_{1},G_{2},G_{3},G_{4},G_{5},G_{6},G_{7}\}$, then
$SO_{red}(G_{1})<SO_{red}(G_{2})<SO_{red}(G_{3})<SO_{red}(G_{4})<SO_{red}(G_{5})<SO_{red}(G_{6})<SO_{red}(G_{7})<SO_{red}(G)$.
\end{theorem}

\section{Applications of reduced Sombor index to octane isomers}

Deng et al.\cite{dengt2021}, considered the correlation between some physico-chemical properties of octane isomers with Sombor index.
In this section, we study the correlation between these physico-chemical properties of octane isomers with reduced Sombor index. We also compare the reduced Sombor index with some other topological indices.

\begin{figure}[ht!]
  \centering
  \scalebox{.16}[.16]{\includegraphics{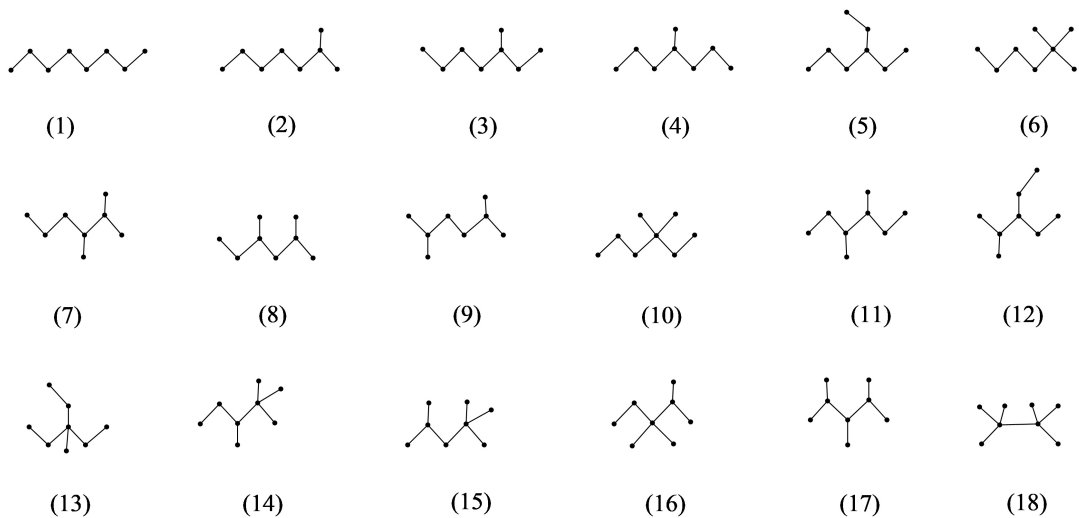}}
  \caption{Chemical graphs of octane isomers.}
 \label{fig-1}
\end{figure}

The chemical graphs of $18$ octane isomers can see in Figure \ref{fig-1}.
We can calculate the values of reduced Sombor index for the $18$ octane isomers in Figure \ref{fig-1} as $[9.0710, 11.4787, 11.30$ $05, 11.3005, 11.1224, 15.9907, 13.4787, 13.7082, 13.8663, 15.7387, 13.3005, 13.3005, 15.4868,$

\noindent$17.8416, 18.3983, 17.7678, 15.6568, 22.2426]$.
Based on the values of Acentric Factors (Entropy, SNar, HNar) of the $18$ octane isomers (see \cite{mila2021,dengt2021}) in Figure \ref{fig-1}, we can also obtain the regression models of the reduces Sombor index (similar to the results of \cite{dengt2021}).
$$AcenFac=0.4881-0.0105\times SO_{red}, R^{2}=0.9213. \eqno{(5)}$$
$$Entropy=124.5-1.317\times SO_{red}, R^{2}=0.8922. \eqno{(6)}$$
$$SNar=5.003-0.1015\times SO_{red}, R^{2}=0.9736. \eqno{(7)}$$
$$HNar=1.793-0.02654\times SO_{red}, R^{2}=0.9341. \eqno{(8)}$$

\begin{table}[!htb]
	\centering
    \caption{$R^{2}$ values between indices and Acentric Factors, Entropy, SNar, HNar.}
     \setlength{\tabcolsep}{0.7mm}{
	\begin{tabular}{c|cccccccc}\hline
	Physico-chemical property    &	   $SO_{red}$  &      $M_{1}$    &    $M_{2}$    &    $F$       &    $R$  &           $SCI$       &    $SDD$       &    $M_{N}$   \\ \hline

	Acentric Factors             &     $0.9213$    &	  $0.9468$   &	  $0.973$    &	  $0.9313$  &    $0.8176$  &      $0.8647$    &    $0.8118$    &    $0.98915$   \\ \hline

    Entropy                      &     $0.8922$    &	  $0.9107$   &	  $0.8868$   &	  $0.9077$  &    $0.8205$  &      $0.8518$    &    $0.8276$    &    $0.90746$   \\ \hline

    SNar                         &     $0.9736$    &	  $0.9974$   &	  $0.8940$   &	  $0.9453$  &    $0.9487$  &      $0.9710$    &    $0.9252$    &    $0.9477$   \\ \hline

    HNar                         &     $0.9341$    &	  $0.9774$   &	  $0.8941$   &	  $0.9453$  &    $0.9487$  &      $0.9710$    &    $0.9252$    &    $0.9115$   \\ \hline
	\end{tabular}}
	
	\label{table10}
\end{table}
The correlation $(R)$ between Acentric Factors(resp. Entropy, SNar, HNar) and reduced Sombor indices of the octane isomers is about -0.959(resp. -0.944, -0.986, -0.966). It shows a good linear relation. Therefore, the reduced Sombor index can help to predict these physico-chemical properties.
We compare the reduced Sombor index with some existing topological indices, we found that sometimes the reduced Sombor index shows better predictive power than the existing indices. It is worth noting that \cite{redz2021} (before our paper) also consider the correlation between Sombor index and Entropy of octane isomers, however, for the sake of the integrity of the article, we have not removed the results about our results of Entropy. We also considered other physico-chemical properties, such as Acentric Factors, SNar and HNar, which do not appear in the \cite{redz2021}.

\section{Concluding Remarks}

In this paper, we determine the first fourteen minimum chemical trees, the first four minimum chemical unicyclic graphs, the first three minimum chemical bicyclic graphs, the first seven minimum chemical tricyclic graphs. At last, we consider applications of reduced Sombor index to octane isomers. However, obtaining a more detailed ordering is still an open problem.

\begin{problem}\label{p6-1}
Further ordering chemical graphs by their Sombor indices.
\end{problem}

We intend to elaborate this matter in the near future.
\\


\begin{thebibliography}{99}

\bibitem{alid2018} A. Ali, Z. Du, M. Ali, A note on chemical trees with minimum Wiener polarity index, {\it  Appl. Math. Comput.\/} {\bf 335} (2018) 231--236.

\bibitem{aiva2021} S. Alikhani, N. Ghanbari, Sombor index of polymers, {\it  MATCH Commun. Math. Comput. Chem.\/} {\bf 86} (2021) 715--728.

\bibitem{argh2017} A. R. Ashrafi, A. Ghalavand, Ordering chemical trees by Wiener polarity index, {\it  Appl. Math. Comput.\/} {\bf 313} (2017) 301--312.

\bibitem{jaus2008} J. A. Bondy, U. S. R. Murty,  {\it Graph Theory\/}, Springer, New York, 2008.

\bibitem{rirm2021} R. Cruz, I. Gutman, J. Rada, Sombor index of chemical graphs, {\it Appl. Math. Comput.\/} {\bf 399} (2021) \#126018.

\bibitem{ctra2021} R. Cruz, J. Rada, Extremal values of the Sombor index in unicyclic and bicyclic graphs, {\it J. Math. Chem.\/} {\bf 59} (2021) 1098-1116.

\bibitem{mila2021} Milano Chemometrics \& QSAR Research Group, Molecular Descriptors: the free online resource, Milano Chemometrics and QSAR Research Group, http://www. moleculardescriptors.eu/dataset/dataset.htm.


\bibitem{dxsa2021} K. C. Das, A. S. Cevik, I. N. Cangul, Y. Shang, On Sombor index, {\it Symmetry\/} {\bf 13} (2021) \#140.

\bibitem{dengt2021} H. Deng, Z. Tang, R. Wu, Molecular trees with extremal values of Sombor indices, {\it Int. J. Quantum. Chem.\/} (2021) doi:10.1002/qua.26622.

\bibitem{gumn2021} I. Gutman, Geometric approach to degree-based topological indices: Sombor indices, {\it MATCH Commun. Math. Comput. Chem.\/} {\bf 86} (2021) 11--16.

\bibitem{guma2021} I. Gutman, Some basic properties of Sombor indices, {\it Open J. Discret. Appl. Math.\/} {\bf 4} (2021) 1--3.

\bibitem{laas2017} A. Ghalavand, A. R. Ashrafi, Extremal graphs with respect to variable sum exdeg index via majorization, \emph{Appl. Math. Comput.} \textbf{303} (2017) 19--23.

\bibitem{gvrf2020} A. Ghalavand, A. R. Ashrafi, Ordering of c-cyclic graphs with respect to total irregularity, \emph{J. Appl. Math. Comput.} \textbf{63} (2020) 707--715.

\bibitem{ganf2020} A. Ghalavand, A. R. Ashrafi, On forgotten coindex of chemical graphs, \emph{MATCH Commun. Math. Comput. Chem.} \textbf{83} (2020) 221--232.

\bibitem{ghas2018} A. Ghalavand, A. R. Ashrafi, Ordering chemical graphs by Randi\'{c} and sum-connectivity numbers, \emph{Appl. Math. Comput.} \textbf{331} (2018) 160--168.

\bibitem{ghor2019} A. Ghalavand, A. R. Ashrafi, R. Sharafdini, O. Ori, Extremal chemical trees with respect to hyper-Zagreb index, \emph{Pure Appl. Math.} \textbf{26} (2019) 177--188.

\bibitem{ggda2017} I. Gutman, A. Ghalavand, T. Dehghan-Zadeh, A. R. Ashrafi, Graphs with smallest forgotten index, \emph{Iran. J. Math. Chem.} \textbf{8} (2017) 259--273.

\bibitem{hoxu2021} B. Horoldagva, C. Xu, On Sombor index of graphs, {\it MATCH Commun. Math. Comput. Chem.\/} {\bf 86} (2021) 703--713.

\bibitem{jcli2021} Y. Jiang, X. Chen, W. Lin, A note on chemical trees with maximal inverse sum indeg index, {\it MATCH Commun. Math. Comput. Chem.\/} {\bf 86} (2021) 29--38.

\bibitem{zlin2021} Z. Lin, On the spectral radius and energy of the Sombor matrix of graphs, arXiv:2102.03960v1.

\bibitem{liwn2021} X. Li, Z. Wang, Trees with extremal spectral radius of weighted adjacency matrices among trees weighted by degree-based indices, {\it Linear Algebra Appl.\/} {\bf 620} (2021) 61--75.

\bibitem{tliu2021} H. Liu, L. You, Z. Tang, J. B. Liu, On the reduced Sombor index and its applications, \emph{MATCH Commun. Math. Comput. Chem.} {\bf 86} (2021) 729--753.

\bibitem{milo2021} I. Milovanovi\'{c}, E. Milovanovi\'{c}, M. Mateji\'{c}, On some mathematical properties of Sombor indices, {\it Bull. Int. Math. Virtual Inst.\/} {\bf 11} (2021) 341--353.

\bibitem{redz2021} I. Red\v{z}epovi\'{c}, Chemical applicability of Sombor indices, {\it J. Serb. Chem. Soc.\/} (2021) doi:10.2298/JSC201215006R.

\bibitem{trdo2021} T. R\'{e}ti, T. Do\v{s}li\'{c}, A. Ali, On the Sombor index of graphs, {\it Contrib. Math.\/} {\bf 3} (2021) 11--18.

\bibitem{wmlf2021} Z. Wang, Y. Mao, Y. Li, B. Furtula, On relations between Sombor and other degree-based indices, {\it J. Appl. Math. Comput.\/} (2021) doi:10.1007/s12190-021-01516-x.

\end{thebibliography}
\end{document}